\newtheorem{theorem}{Theorem}
\newtheorem{proposition}[theorem]{Proposition}
\newtheorem{lemma}[theorem]{Lemma}
\newtheorem{definition}[theorem]{Definition}
\newtheorem{claim}{Claim}
\newtheorem*{theorem*}{Theorem}
\theoremstyle{remark}
\newtheorem*{proofofclaim}{Proof of claim}
\newcommand*{\QEDclaim}{\hfill\ensuremath{\diamondsuit}}
\DeclareMathOperator{\eg}{eg} 
\DeclareMathOperator{\fw}{fw} 
\DeclareMathOperator{\intt}{int} 
\newcommand{\surf}{\Sigma}
\newcommand{\CC}{\mathcal{C}}
\newcommand{\DD}{\mathcal{D}}
\newcommand{\Z}{\mathbb{Z}}
\newcommand{\N}{\mathbb{N}}
\newcommand{\R}{\mathbb{R}}
\newcommand{\contract}{/}
\newcommand{\delete}{\setminus}
\newcommand{\setdiff}{-}
\newcommand{\numxcaps}{\gamma}
\let\ge\relax
\newcommand{\ge}{\geqslant}
\let\le\relax
\newcommand{\le}{\leqslant}
\title{Face covers and rooted minors in bounded genus graphs}
\author{Samuel Fiorini \and Stefan Kober \and Michał T.\ Seweryn \and Abhinav Shantanam \and Yelena Yuditsky}
\date{\today}
\begin{document}

\maketitle

\begin{abstract}
A {\em rooted graph} is a graph together with a designated vertex subset, called the {\em roots}. In this paper, we consider rooted graphs embedded in a fixed surface. A collection of faces of the embedding is a {\em face cover} if every root is incident to some face in the collection. We prove that every $3$-connected, rooted graph that has no rooted $K_{2,t}$ minor and is embedded in a surface of Euler genus $g$, has a face cover whose size is upper-bounded by some function of $g$ and $t$, provided that the face-width of the embedding is large enough in terms of $g$. In the planar case, we prove an unconditional $O(t^4)$ upper bound, improving a result of B\"ohme and Mohar~\cite{BM02}. The higher genus case was claimed without a proof by B\"ohme, Kawarabayashi, Maharry and Mohar~\cite{BKMM08}.
\end{abstract}

\section{Introduction}

Let \(G\) be a plane graph, and let \(R \subseteq V(G)\) be a distinguished set of \emph{root} vertices.
A \emph{face cover} for the rooted graph \((G, R)\) is a collection of faces of \(G\) such that every root \(r \in R\) is incident with at least one of the faces in this collection.

When does $(G,R)$ admit a small face cover? One structure preventing the existence of a small face cover is a large \emph{rooted \(K_{2, t}\)-model}, that is a sequence of pairwise disjoint connected subgraphs \((X_1, X_2; Y_1, \ldots, Y_t)\) of \(G\) such that each \(X_i\) is adjacent to each \(Y_j\) in \(G\), and each \(Y_j\) contains at least one vertex from the root set \(R\). (The sets \(X_1\) and \(X_2\) may or may not contain a root.)
If there exists a rooted \(K_{2, t}\)-model \((X_1, X_2; Y_1, \ldots, Y_t)\), then every face of the embedding of \(G\) is incident with roots in at most two sets \(Y_j\), and thus, every face cover contains at least \(\lceil t/2 \rceil\) faces. We say that a rooted graph contains a {\em rooted $K_{2,t}$ minor} if it contains a rooted $K_{2,t}$-model.\footnote{Alternatively, one can readily generalize the usual notion of a graph minor to rooted graphs in terms of graph operations. The main differences are that when an edge is contracted, we add the resulting vertex in the root set iff at least one of its endpoints is a root, and that we allow a new operation that removes one vertex from the root set.}
Note that, unless explicitly mentioned, any notion for a rooted graph has the same definition as for a non-rooted graph. 

B\"ohme and Mohar~\cite{BM02} showed that if \(G\) is \(3\)-connected, then a rooted \(K_{2, t}\)-model for some large value of \(t\) is in some sense the only obstruction preventing a small face cover. More precisely, they showed that for every integer \(t \ge 1\), there exists an integer \(k = k(t) \ge 1\) such that if there does not exist a rooted \(K_{2, t}\)-model, then there exists a face cover of size at most \(k\).
In the original proof, \(k\) depends exponentially on \(t\). Our first result is a significantly simpler proof with \(k \in O(t^4)\).

\begin{theorem} \label{thm:planar}
There exists a function $f_{\ref{thm:planar}} : \Z_{\ge 1} \to \Z_{\ge 0}$ with $f_{\ref{thm:planar}}(t) = O(t^4)$ such that the following holds. If $(G,R)$ is a $3$-connected plane rooted graph without a rooted $K_{2,t}$ minor, then $(G,R)$ admits a face cover of size at most $f_{\ref{thm:planar}}(t)$.%
\end{theorem}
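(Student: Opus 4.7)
The plan is to prove the contrapositive. Assume that $(G, R)$ is a $3$-connected plane rooted graph with no face cover of size at most $Ct^4$ for a sufficiently large absolute constant $C$; I will construct a rooted $K_{2, t}$ model.

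\emph{Step~1 (many private roots).} Take any inclusion-minimal face cover $\mathcal{F}$ of $(G,R)$; by assumption $|\mathcal{F}|>Ct^4$. Inclusion-minimality yields, for each $F \in \mathcal{F}$, a \emph{private root} $r_F \in R$ incident to $F$ and to no other face of $\mathcal{F}$. Set $R_0 := \{r_F : F \in \mathcal{F}\}$ and $\mathcal{F}_0:=\{F: r_F\in R_0\}$.

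\emph{Step~2 (spread out the private faces).} From $R_0$ extract $R_1$ of size $\Omega(t^2)$ so that the associated private faces are pairwise vertex-disjoint. The justification is that if a vertex $v$ is incident to many private faces, then a short walk of faces around $v$ would cover all of their private roots and could replace those faces in $\mathcal{F}$, contradicting minimality; this bounds the conflict degree of each face in the auxiliary ``shares a vertex with'' graph on $\mathcal{F}_0$, and a greedy independent-set extraction loses at most a factor of $O(t^2)$.

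\emph{Step~3 (apex subgraphs from planarity).} Each root $r\in R_1$ has at least three neighbours on its private face by $3$-connectedness, and by Step~2 these neighbourhoods are pairwise disjoint across $R_1$. Using a pigeonhole argument over the $\Theta(|R_1|^2)$ ordered pairs of roots, together with the fact that two internally disjoint paths in a plane graph partition the sphere into two regions whose contents are cyclically ordered, I produce two disjoint connected subgraphs $X_1, X_2 \subseteq G$ each adjacent (through disjoint attachment paths) to at least $t$ roots of $R_1$. Letting $Y_j$ be the $j$-th attachment structure around a chosen root, truncated so as to remain disjoint from $X_1$, $X_2$ and the other $Y_i$, completes the rooted $K_{2,t}$ model.

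The crux of the argument is Step~3: turning an abundance of vertex-disjoint rooted faces into a common bipartite $K_{2,t}$ structure. This is where the exponent $4$ enters — a factor of $t^2$ to spread out the private faces in Step~2 and another $t^2$ from the pigeonhole over ordered pairs of roots in Step~3. A sharper planar linkage argument could conceivably reduce the exponent, but $O(t^4)$ suffices and already improves dramatically over the exponential bound of B\"ohme--Mohar.
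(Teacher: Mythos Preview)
Your contrapositive framework matches the paper's, but Steps~2 and~3 are not proofs. In Step~2, the replacement argument fails: if a vertex $v$ lies on private faces $F_1,\dots,F_k\in\mathcal{F}$, each private root $r_{F_i}$ may sit anywhere on the boundary of $F_i$, arbitrarily far from $v$, so a ``short walk of faces around $v$'' has no reason to cover $r_{F_1},\dots,r_{F_k}$, and minimality of $\mathcal{F}$ is not contradicted. Even granting a per-vertex bound, a single face can have an arbitrarily long boundary cycle and hence share vertices with unboundedly many other faces of $\mathcal{F}_0$; there is no uniform bound on the conflict degree, so a greedy extraction does not lose only $O(t^2)$. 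Step~3 is where the real construction must happen, and you have essentially not supplied it: ``pigeonhole over the $\Theta(|R_1|^2)$ ordered pairs of roots'' names neither the bins nor how the two connected sets $X_1,X_2$ emerge from the count. (The subsidiary claim that each root has at least three neighbours \emph{on its private face} is also false: in a $3$-connected plane graph every facial cycle is induced, so a vertex has exactly two neighbours on any face incident to it.)

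The paper proceeds entirely differently. In place of your Steps~1--2 it invokes Bienstock--Dean once to extract $t^4$ roots no two of which share a face --- a weaker condition than vertex-disjoint private faces, and one that is actually achievable. For Step~3 it takes a Schnyder embedding of $G$ in a triangle, with its three spanning in-arborescences $T_1,T_2,T_3$. These induce three partial orders $P_1,P_2,P_3$ on the roots; three successive applications of Mirsky's theorem reduce the $t^4$ roots either to a chain of size $t$ in some $P_i$, or to a size-$t$ set that is an antichain in all three, which forces the roots onto a common coordinate line. In either case two of the Schnyder trees furnish, concretely, two edge-disjoint trees meeting exactly at those $t$ roots --- and that pair \emph{is} the rooted $K_{2,t}$ model. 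This explicit tree construction is precisely the missing idea your Step~3 gestures at.
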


We do not know whether our bound of $f_{\ref{thm:planar}}(t) = O(t^4)$ is tight. However, we observe that $f_{\ref{thm:planar}}(t) = \Omega(t^2)$ for any valid upper bound $f_{\ref{thm:planar}}(t)$ on the size of a face cover, see \Cref{fig:windmill} and \Cref{prop:windmill} below.

\begin{figure}[ht]
\centering
\includegraphics[width=0.3\textwidth]{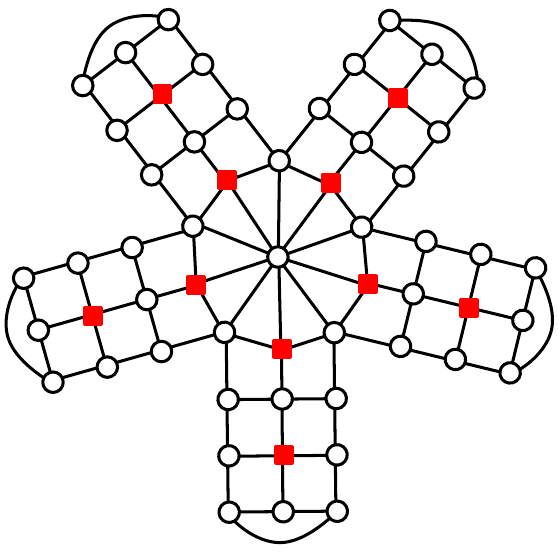}
\caption{The {\em windmill graph} of parameter $t = 10$. Generalizing this construction gives a family of $3$-connected, plane rooted graphs with no rooted $K_{2,t}$ minor in which every face cover has size $\Omega(t^2)$. See \Cref{prop:windmill} below for a proof.}
\label{fig:windmill}
\end{figure}

Our second result covers the higher genus case (Euler genus $g \ge 1$). It was claimed as a remark to Theorem 3.4 in B\"ohme, Kawarabayashi, Maharry and Mohar~\cite{BKMM08}, without proof. For background information related to embeddings of graphs on surfaces, including the face-width of an embedding, the genus of a graph, and more, see Section \ref{sec:background}.

\begin{theorem}\label{thm:bounded_genus}
There are functions $w_{\ref{thm:bounded_genus}}:\Z_{\ge1}\to\Z_{\ge0}$ and $f_{\ref{thm:bounded_genus}}: \mathbb{Z}_{\ge 1} \times \mathbb{Z}_{\ge 1} \to \mathbb{Z}_{\ge 0}$, such that the following holds. If $(G, R)$ is a $3$-connected rooted graph without a rooted \(K_{2,t}\) minor, embedded in a surface of Euler genus $g$ with face-width at least $w_{\ref{thm:bounded_genus}}(g)$, then $(G,R)$ admits a face cover of size at most $f_{\ref{thm:bounded_genus}}(g, t)$.
\end{theorem}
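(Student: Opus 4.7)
The plan is to prove \Cref{thm:bounded_genus} by induction on the Euler genus $g$, with \Cref{thm:planar} providing the base case $g=0$. In the inductive step I would exploit the large face-width assumption to find a non-contractible ``planarizing'' cycle surrounded by many concentric cycles, then cut along it to reduce to a surface of smaller Euler genus while keeping face-width still large enough for the recursion.

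The key starting point is a classical consequence of large face-width (e.g., in Mohar--Thomassen's book on embeddings in surfaces): if the face-width of $G$ is at least a suitably fast-growing function of $g$ and $N$, then $G$ contains pairwise disjoint, pairwise homotopic, concentric non-contractible cycles $C_0,C_1,\ldots,C_{N-1}$ bounding annuli $A_1,\ldots,A_{N-1}$, and $C_0$ can be chosen so that cutting along it strictly reduces the Euler genus. I would then split on the number of roots of $R$ lying on $C_0$ (or in a narrow neighbourhood of $C_0$). If that number is large in terms of $t$, I would extract a rooted $K_{2,t}$-minor: take $X_1=\{x_1\}$ and $X_2=\{x_2\}$ on an inner cycle $C_{N-1}$, pigeonhole along $C_0$ to find $t$ roots $r_1,\ldots,r_t$ that are well-spread, and use $3$-connectedness together with the nested concentric cycles to build $t$ pairwise vertex-disjoint connected subgraphs $Y_1,\ldots,Y_t$, each containing one $r_j$ and edges to both $x_1$ and $x_2$; the large number of nested cycles is exactly what makes these simultaneous disjoint routings possible, and the resulting rooted $K_{2,t}$ contradicts the hypothesis. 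Otherwise the roots near $C_0$ are few: I would cover them by a bounded number of incident faces, cut $G$ along $C_0$, repair the cut boundary by a small local gadget to restore $3$-connectedness, and recurse on the resulting graph $G'$, which embeds in a surface of Euler genus at most $g-1$ and inherits the absence of a rooted $K_{2,t}$-minor. Combining the small cover of the neighbourhood of $C_0$ with the cover produced by the recursion yields the desired face cover of $(G,R)$, provided $f_{\ref{thm:bounded_genus}}$ and $w_{\ref{thm:bounded_genus}}$ are defined recursively in $g$.

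The hardest step I expect is the rooted $K_{2,t}$-minor extraction: producing a rooted $K_{2,t}$ from a concentric region with many roots requires first pigeonholing the chosen roots into sufficiently separated portions of $C_0$ and then carrying out $t$ simultaneous vertex-disjoint routings using $3$-connectedness, in a topological setting where $C_0$ need not separate the surface and the ``two sides'' $X_1,X_2$ must be constructed artificially. A secondary but nontrivial difficulty is the surgery step: repairing the cut boundary without introducing a rooted $K_{2,t}$-minor and while preserving face-width large enough for the inductive hypothesis to apply will most likely force $w_{\ref{thm:bounded_genus}}(g)$ to be defined recursively and to grow very quickly in $g$, while the loss in $f_{\ref{thm:bounded_genus}}$ at each inductive step must remain an additive constant depending only on $g$ and $t$.
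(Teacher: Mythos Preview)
Your inductive plan is a different route from the paper's proof, and the step you flag as hardest is in fact broken as written. With $x_1$ and $x_2$ both chosen as single vertices on the innermost cycle $C_{N-1}$, a branch set $Y_j$ that contains a root $r_j\in V(C_0)$ and has an edge to each of $x_1,x_2$ must reach within distance~$1$ of two fixed vertices on $C_{N-1}$. Since the $Y_j$ are pairwise disjoint and live in the annulus between $C_0$ and $C_{N-1}$, at most two of them can do this (the annulus has only two ``arcs'' of $C_{N-1}$ between the neighbours of $x_1$ and those of $x_2$), so you cannot get $t\ge 3$ central branch sets this way. A rooted $K_{2,t}$ from many roots on $C_0$ really needs $X_1$ and $X_2$ on \emph{opposite sides} of $C_0$, and on the side of $C_0$ away from your nest you have no control; $3$-connectivity alone does not force each root on $C_0$ to have a neighbour there. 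So the extraction step, as stated, does not go through, and it is not clear how to repair it without assuming nests on both sides of $C_0$.

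The paper avoids this issue entirely by taking a non-inductive, one-shot approach. Using Yu's planarization theorem it cuts $\surf$ along a full planarizing set of cycles at once, each surrounded by a deep nest, and organizes the resulting pieces into a surface cover $\{(\Pi_i,\Pi_i^+)\}$ of size at most $2g$, where every $\Pi_i$ is a sphere or projective plane with boundary. There is no case split on the number of roots near the cuts: \emph{all} roots, including those on the planarizing cycles, simply land in some $\Pi_i$. The work you anticipate in the surgery step is done instead by Lemma~\ref{3c-minor}: for each $\Pi_i$ one contracts the components of $G\cap\Pi_i^+$ lying outside $\Pi_i$ to single vertices, obtaining a $3$-connected surface minor $H_i$ embedded in $\widehat{\Pi_i^+}$ that agrees with $G$ in the neighbourhood of $\Pi_i$. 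No gadget is added, so no spurious rooted $K_{2,t}$ minor can appear; and because $H_i$ agrees with $G$ near $\Pi_i$, every face of $H_i$ has a matching face of $G$ incident to the same roots, so face covers lift back immediately (Lemma~\ref{lifting-covers}). One then applies \Cref{thm:planar} or the projective-planar case (\Cref{proj-planar-case}) to each $(H_i,R\cap\Pi_i)$ and takes the union. The price is the exponential face-width bound coming from Yu's theorem, but the argument sidesteps both of your hard steps at once.
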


The function $f_{\ref{thm:bounded_genus}}(g,t)$ resulting from our proof is $O(g f_{\ref{thm:planar}}(t)) = O(g \cdot t^4)$. We do not know whether it is optimal. However, we point out that some lower bound on the face-width is necessary, for instance because of the graphs described in~\Cref{fig:bagel}.

\begin{figure}[hb]
\centering
\includegraphics[width=0.35\textwidth]{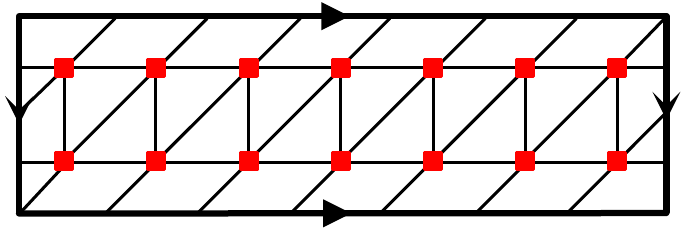}
\caption{The figure shows a {\em bagel graph} embedded in the torus, with $n = 14$ vertices, all included in the root set. The construction generalizes to every even $n \ge 6$ and gives a $4$-connected, $n$-vertex rooted graph $B_n$ embedded on the torus and without rooted $K_{2,t}$ minor for $t \ge 5$. However every face cover has size $\Omega(n)$. Notice that the face-width of $B_n$ is $2$. See \Cref{prop:bagel} for details.}
\label{fig:bagel}
\end{figure}

To conclude this introduction, we give some motivation for \Cref{thm:planar,thm:bounded_genus}. Our understanding is that the earliest motivation lies in the study of apex graphs and their (Euler) genus. Let $G$ denote a planar graph, and let $H$ denote any graph obtained from $G$ by adding a single vertex, say $v_0$, adjacent to any subset of vertices of $G$.
Then $H$ is known as an {\em apex-planar graph}. Although removing a single vertex from $H$ results in a planar graph, it is well-known that $\eg(H)$, the Euler genus of $H$, cannot be bounded by a constant.

Letting $R := N_H(v_0)$ denote the neighborhood of $v_0$ in $H$, consider the rooted graph $(G,R)$. Suppose that $(G,R)$ has a rooted $K_{2,t}$ minor, then $H$ has a $K_{3,t}$ minor. Since $\eg(K_{3,t}) = \Omega(t)$ and since the genus is minor-monotone, we conclude that $\eg(H) \ge \eg(K_{3,t}) = \Omega(t)$. Hence, having a rooted $K_{2,t}$ minor in $(G,R)$ with $t$ large is an obstruction to having small genus for the corresponding apex graph $H$. 

Now suppose that $(G,R)$ has $\tau$ faces $f_1$, \ldots, $f_\tau$ forming a minimal face cover. Let $R_1,\dots,R_\tau$ denote a partition of the root set $R$ into nonempty subsets, where each $R_i$ consists of roots incident to face $f_i$. Consider the graph $H'$ obtained by splitting the vertex $v_0$ in $\tau$ vertices $v_{0,1},\dots,v_{0,\tau}$, making each $v_{0,i}$ adjacent to all vertices in $R_i$. Consider also the graph $H''$ obtained from $H'$ by adding an arbitrary spanning tree on the vertices $v_{0,1}$, \ldots, $v_{0,\tau}$. Notice that (i) $H'$ is planar, since we can draw each $v_{0,i}$ inside $f_i$, the corresponding face of $G$, (ii) $H''$ can be obtained from $H'$ by adding $\tau - 1$ edges, and (iii) $H$ is a minor of $H''$. Since the addition of a single edge to a graph increases its genus by at most $2$, we conclude $\eg(H) \le \eg(H'') \le \eg(H') + 2 (\tau-1) = 2 \tau - 2$.

In this context, \Cref{thm:planar} states that (provided that $G$ is $3$-connected) either $\eg(H) \le O(t^4)$ or $H$ contains a $K_{3,t}$ minor, certifying that $\eg(H) \ge \Omega(t)$. \Cref{thm:bounded_genus} can be seen as a generalization of this result to graphs $H$ obtained from a bounded genus graph $G$ by adding a single vertex. 

A related work is that of Kawarabayashi and Sidiropoulos~\cite{KS15}, who show that $\eg(H) = O(\tau^2)$, where $H$ is an apex-planar graph with apex $v_0$ and $\tau$ is the minimum size of a face cover of $G = H - v_0$. 

A second motivation for our main theorems lies in the recent work of Aprile, Fiorini, Joret, Kober, Seweryn, Weltge and Yuditsky~\cite{AFJKSWY25}. In that paper, it is shown that certain integer programs with bounded subdeterminants can be reduced to integer programs of the form
\begin{equation}
\label{eq:IP}
\max \left\{ p^\intercal y : \ell(v,w) \le y(v) - y(w) \le u(v,w) \text{ for all } (v,w) \in E(G), \, Wy = d, \, y \in \Z^{V(G)} \right\},
\end{equation}
where $k$ is a constant, $G$ is a directed graph, $p \in \Z^{V(G)}$, $\ell, u \in \Z^{E(G)}$, $W \in \Z^{[k] \times V(G)}$ and $d \in \Z^{k}$. They define $R$ as the set of vertices $v$ such that $W(\cdot,v) \neq \mathbf{0}$. This includes a vertex $v$ in the root set whenever it appears with a nonzero coefficient in some of the $k$ constraints $Wx = d$. 

Aprile \emph{et al.}~\cite{AFJKSWY25} observe that the rooted graph $(G,R)$ has no rooted $K_{2,t}$ minor, for some constant $t$ (larger than $k$). Furthermore, they prove the following variant of the graph minor structure theorem: every graph with no rooted $K_{2,t}$ minor admits a certain decomposition, in which bounded genus graphs are a main building block.

Essentially, this reduces \eqref{eq:IP} to the case where $(G,R)$ is a $3$-connected, rooted graph that has no rooted $K_{2,t}$ minor and is embedded in a bounded genus surface with large face-width. In this case, \Cref{thm:planar,thm:bounded_genus} turn out to be the key to solving \eqref{eq:IP} in the sense that they allow the efficient guessing of all the variables $y(v)$ where $v \in R$. Once these variables are fixed, the resulting integer program can be solved in (strongly) polynomial time, since its constraint matrix is totally unimodular\footnote{A matrix is {\em totally unimodular} if all of its subdeterminants are in $\{-1,0,1\}$.}.  

We conclude this introduction with a short outline of the paper. \Cref{sec:background} provides background on graph terminology, Schynder embeddings and graphs on surfaces. The proof of \Cref{thm:planar} is given in \Cref{sec:planar}, and that of \Cref{thm:bounded_genus} in \Cref{sec:bounded_genus}.

\section{Background} \label{sec:background}

In this section, we provide the necessary background for the proofs of \Cref{thm:planar,thm:bounded_genus}. 

\subsection{Graph terminology} \label{sec:background_graphs}

We generally follow the textbook~\cite{Diestel}. 

Let $G = (V(G),E(G))$ denote a graph.\footnote{All graphs in this paper are undirected, and most are simple. For now, we assume that $G$ is a simple undirected graph.}
For $X \subseteq V(G)$, we let $G - X$ denote the graph obtained from $G$ by deleting all the vertices in $X$ (as well as all the edges incident to some vertex of $X$), we let $N(X) := \{w \in V(G - X) \mid \exists v \in X : vw \in E(G)\}$ denote the \emph{open neighborhood of $X$}, and we let $N[X] := X \cup N(X)$ denote the \emph{closed neighborhood of $X$}.
The \emph{cut} $\delta(X)$ is defined as the set of edges with exactly one end in $X$, that is, $\delta(X) := \{vw \in E(G) \mid v \in X,\ w \notin X\}$.
If $v \in V(G)$ is a vertex of $G$, then we write $G-v$ instead of $G-\{v\}$ and we let $N(v) := N(\{v\})$, $N[v] := N[\{v\}]$ and $\delta(v) := \delta(\{v\})$.

If $e \in E(G)$ is an edge of $G$, we denote by $G \delete e$ the graph obtained by \emph{deleting} $e$ and $G \contract e$ the graph obtained by \emph{contracting} $e$.

Given a graph $G$ and a distinguished vertex $r\in V(G)$, an \textit{$r$-in-arborescence} is a spanning tree $T$ of $G$, where we assign directions to the edges of $T$, such that every vertex in $V(G)$ has a directed path to $r$ in $T$. A vertex $v$ is the {\em parent} of the vertex $u$ in $T$, if $T$ contains the directed arc $(u,v)$. A vertex $u$ is a {\em descendant} of vertex $v$ in $T$, if there is a directed path from $u$ to $v$ in $T$. In particular, vertex $v$ is a descendant of itself. Finally, vertex $v$ is an {\em ancestor} or vertex $u$ if $u$ is a descendant of $v$.

\subsection{Schnyder embeddings} \label{sec:background_Schnyder}

Our proof of \Cref{thm:planar} uses (generalized) Schnyder embeddings, as defined by Felsner~\cite{Fel01}. Below, we extract a minimal set of notions and results from that paper in preparation for our proof.

Let $G$ be any $3$-connected planar graph, and let $a_1$, $a_2$ and $a_3$ denote distinct vertices, all on the same facial cycle (since $G$ is $3$-connected and planar, its facial cycles do not depend on the embedding).
Let $\Delta := \{x \in \R^3_{\ge 0} : x_1 + x_2 + x_3 = 1\}$ denote the standard $2$-dimensional simplex in $\R^3$, and let $e_1 := (1,0,0)$, $e_2 := (0,1,0)$ and $e_3 := (0,0,1)$ denote the vertices of $\Delta$. For $z \in \Delta$ and $i \in [3]$, let $\Pi_i(z) := \{x \in \Delta : x_{i-1} \le z_{i-1},\ x_{i+1} \le z_{i+1}\}$ denote the \emph{parallelogram with corners $z$ and $e_i$} (here and throughout the paper, indices are computed cyclically).
See Figure~11 in \cite{Fel01} or \Cref{fig:BM1} below for an illustration.

\begin{theorem} \label{thm:Schnyder_embedding}
Under the above assumptions, $G$ has a straight-line, planar embedding $\mu : V(G) \to \Delta$ and oriented spanning trees $T_1$, $T_2$ and $T_3$ such that, for each $i \in [3]$,

\begin{enumerate}[(i)]
\item $T_i$ is an $a_i$-in-arborescence (that is, all the directed edges of $T_i$ point toward $a_i$);
\item $\mu(a_i) = e_i$;
\item for each $v \in V(G - a_i)$, $\Pi_i(\mu(v))$ contains a unique point from $\{\mu(w) : w \in N(v)\}$, namely, $\mu(v')$ where $v'$ is the parent of $v$ in $T_i$. 
\end{enumerate}
\end{theorem}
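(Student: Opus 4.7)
The plan is to reduce this to Felsner's generalization of Schnyder's original theorem for planar triangulations. First, I would fix a planar embedding of $G$ in which the facial cycle through $a_1$, $a_2$, $a_3$ is the outer face, which is possible because the facial cycles of a $3$-connected planar graph are intrinsic. I would then invoke the existence of a Schnyder wood on $G$: an orientation together with a $3$-coloring of the edges (with the outer edge $a_i a_{i+1}$ directed from $a_{i+1}$ to $a_i$ and colored $i$) such that, at every inner vertex $v$, (a) $v$ has exactly one outgoing edge of each color $1, 2, 3$ and these three edges appear in that cyclic order around $v$, and (b) between two consecutive outgoing edges (of colors $i$ and $i+1$), the only incoming edges are of color $i-1$. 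Existence of such a structure on any $3$-connected planar graph with three designated outer vertices was established by Felsner.

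The edges colored $i$ and oriented toward $a_i$ form a spanning $a_i$-in-arborescence $T_i$ of $G$, yielding (i). To define the coordinate map $\mu$, I would associate a combinatorial weight vector to every vertex via region counts. For each inner vertex $v$ and each $i \in [3]$, let $P_i(v)$ be the unique directed $v$-to-$a_i$ path in $T_i$. A central lemma states that $P_1(v), P_2(v), P_3(v)$ are internally disjoint, and together with the three outer edges they partition the closed disk into three regions $R_1(v), R_2(v), R_3(v)$, where $R_i(v)$ is the one opposite $a_i$. I would then define $v_i$ as a weighted count of faces and path-edges inside $R_i(v)$, chosen so that $v_1 + v_2 + v_3$ is a positive constant independent of $v$, and set $\mu(v) := (v_1, v_2, v_3)/(v_1 + v_2 + v_3) \in \Delta$, with the natural convention $\mu(a_i) = e_i$ (since $R_j(a_i)$ degenerates for $j \ne i$), which yields (ii).

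Property (iii), the parallelogram condition, is the crux. One shows by a local exchange argument that if $v'$ is the $T_i$-parent of $v$, then moving from $v$ to $v'$ strictly enlarges $R_i$ (by at least one interior face) while weakly shrinking both $R_{i-1}$ and $R_{i+1}$; hence $\mu(v') \in \Pi_i(\mu(v))$, and in fact $\mu(v')$ is the corner of this parallelogram on the $a_i$-side. Conversely, for any neighbor $w$ of $v$ that is not its $T_i$-parent, the Schnyder rule (a)--(b) at $v$ forces $w$ to lie on $P_{i-1}(v) \cup P_{i+1}(v)$ or on the $a_i$-opposite side of one of them, which translates via strict monotonicity of one of the two coordinates defining the parallelogram to $\mu(w) \notin \Pi_i(\mu(v))$. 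Once (iii) is established for all three indices, the classical weak-barycentric-representation criterion (also due to Schnyder and extended by Felsner) implies that the straight-line drawing $\mu$ is planar, completing the proof.

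The main obstacle is the bookkeeping required for (iii): one must choose the region weights so that the coordinate sum is constant across all vertices, and verify simultaneously that parents land in the corresponding parallelograms while non-parents do not. Handling the boundary vertices $a_i$ and the degeneracies of $R_j(a_i)$ also requires a little care, but becomes routine once the interior case is set up correctly.
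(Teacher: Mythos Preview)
The paper does not actually prove this theorem: it is quoted as a background result from Felsner~\cite{Fel01} (see the discussion immediately following the statement in Section~\ref{sec:background_Schnyder}), so there is no ``paper's own proof'' to compare against. Your sketch is precisely the Felsner argument the paper is citing, so in that sense your approach is the intended one.

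One small point worth tightening: for general $3$-connected planar graphs (as opposed to triangulations), Felsner's Schnyder woods allow an edge to be bi-oriented and to carry two distinct colors; the paper explicitly notes that $T_1,T_2,T_3$ may share edges. Your description of the local rule at an inner vertex (one outgoing edge of each color, incoming edges of the opposite color in between) is the triangulation version and would need the standard adjustment to accommodate bidirected edges. This does not affect the overall structure of your argument, but the bookkeeping for property~(iii) and for the region weights should be stated in the generality that allows shared edges.
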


Schnyder embeddings are discussed in Section 2.4 of~\cite{Fel01}. As a matter of fact, the embeddings constructed by Felsner satisfy extra properties, including the following. Letting $f$ denote the number of faces of $G$, there exists an embedding $\mu : V(G) \to \Delta$ as in \Cref{thm:Schnyder_embedding} that is \emph{convex} in the sense that every face distinct from the outer face is the set of interior points of a convex polygon, and such that $\mu(v) \in \frac{1}{f-1} \Z^3$ for each $v \in V(G)$. This constitutes a generalization to all $3$-connected planar graphs of a result of Schnyder on $3$-connected planar triangulations~\cite{Sch89,Sch90}. 

We point out that for general $3$-connected planar graphs, oriented spanning trees $T_1$, $T_2$ and $T_3$ may have common edges (with some restrictions, see~\cite{Fel01}). However, if $G$ is a triangulation they can be chosen to be edge-disjoint. 

\subsection{Graphs on surfaces} \label{sec:background_surfaces}

This section establishes some background notions needed for~\Cref{thm:bounded_genus}. See the book by Armstrong~\cite{Armstrong} for an introduction to topology, or that by Mohar and Thomassen~\cite{MT01} for a thorough treatment of graphs on surfaces.

A \emph{surface} $\surf$ is a non-empty compact connected Hausdorff topological space in which every point has a neighborhood that is homeomorphic to the plane. We will encounter slightly more general topological spaces, which we define right away. A \emph{surface (possibly) with boundary} is a topological space \(\Pi\) that can be obtained from a surface \(\surf\) by deleting the interiors of pairwise disjoint closed disks \(\Delta_1, \ldots, \Delta_c \subseteq \surf\) (we allow $c = 0$). The boundaries of the disks \(\Delta_1, \ldots, \Delta_c\) are the \emph{cuffs} of \(\Pi\).

Some examples of surfaces with boundary are: a closed disk (obtained from a sphere by removing the interior of a single closed disk), a M\"obius band (obtained from a projective plane by removing the interior of a single closed disk), and a cylinder (obtained from a sphere by removing the interiors of two disjoint closed disks).

The operation of gluing a closed disk along its boundary to a cuff of a surface with boundary is called \emph{capping} the cuff. Capping all cuffs of a surface with boundary yields a surface. The surface obtained from \(\Pi\) by capping all the cuffs is denoted by \(\widehat{\Pi}\). 

For $h, \numxcaps, c \in \mathbb{N}$, let $\Pi(h, \numxcaps, c)$ denote the surface with boundary obtained from a sphere by removing the interiors of $2h+\numxcaps+c$ pairwise disjoint closed disks and gluing $h$ cylinders and $\numxcaps$ M\"obius strips along any $2h +\numxcaps$ cuffs (each cylinder is glued along two cuffs and each M\"obius band along one), leaving the remaining $c$ cuffs untouched. It is known that every surface with boundary is homeomorphic to $\Pi(h, \numxcaps, c)$ for some $h, \numxcaps, c \in \mathbb{N}$.\footnote{We remark that $h$ and $\numxcaps$ are however not uniquely determined, since $\Pi(h, \numxcaps, c)$ is homeomorphic to $\Pi(h+1, \numxcaps-2, c)$ whenever $\numxcaps \ge 3$.} 

Consider a surface with boundary $\Pi$. 
A \emph{simple curve} in $\Pi$ is the image of a continuous function $f:[0,1]\rightarrow \Pi$ satisfying $f(x) \neq f(y)$ whenever $x \neq y$ and $|x-y| \neq 1$.
The simple curve $e=f([0,1])$ \emph{connects} the endpoints $f(0)$ and $f(1)$ (possibly, $f(0) = f(1)$). Its \emph{interior} is defined as $e \setdiff \{f(0),f(1)\} = f((0,1))$. If $f(0) \neq f(1)$, we call $e$ a \emph{simple open curve} or \emph{arc}. In case $f(0) = f(1)$, we call $e$ a \emph{simple closed curve} or \emph{loop}.

A loop contained in the interior of $\Pi$ is called \emph{two-sided} if it has a closed neighborhood that is homeomorphic to a cylinder and \emph{one-sided} otherwise. A surface with boundary $\Pi$ is \emph{orientable} if all its loops are two-sided. It is known that $\Pi(h, \numxcaps, c)$ is orientable if and only if $\numxcaps = 0$.

We say that a graph $G$ is \emph{embedded} in a surface with boundary $\Pi$ if the vertices of $G$ are distinct points in $\Pi$ and every edge of $G$ is a simple curve in $\Pi$ that connects the endpoints of the edge, such that its interior is disjoint from other vertices and edges. We say that a surface with boundary \(\Pi\) is \emph{contoured} by a graph \(G\) embedded in it if each cuff of \(\Pi\) coincides with a cycle of \(G\). For simplicity, we often identify an embedded graph with the corresponding subset of the surface, when no confusion can occur.

Consider a graph $G$ embedded in a surface with boundary $\Pi$.
The {\em faces} of the embedded graph $G$ are the (arcwise) connected components of $\Pi \setdiff G$.   
The embedding of $G$ in $\Pi$ is said to be \emph{cellular} (or \emph{$2$-cell}) if each one of its faces is an open disk. (If $\Pi$ has cuffs, this implies that each cuff coincides with a cycle of $G$, that is, $\Pi$ is contoured by $G$.)

Now assume that $G$ is embedded in a surface $\surf$.
The dual graph $G^*$ has the set of faces of $G$ as vertex set, and its edges bijectively correspond to the edges of $G$. The edge $e^* \in E(G^*)$ corresponding to an edge $e \in E(G)$ connects the face(s) of $G$ incident to $e$ in the embedding of $G$. The dual graph $G^*$ also admits an embedding in $\surf$.\footnote{We point out that the dual of a simple graph is not necessarily simple: loops and parallel edges can appear.}

Next, assume that the embedding of $G$ in $\surf$ is cellular. Letting $F(G)$ denote the set of faces of $G$, it is known that the quantity $2+|E(G)|-|V(G)|-|F(G)|$ is an invariant, known as the \emph{Euler genus} of surface $\surf$. The \emph{Euler genus} of a surface with boundary $\Pi$ is defined as the Euler genus of its corresponding surface with capped cuffs $\widehat{\Pi}$. We denote the Euler genus of $\Pi$ by $\eg(\Pi)$.

The \emph{Euler genus} of a graph $G$, denoted by $\eg(G)$, is defined as the minimum Euler genus of a surface in which $G$ can be embedded. 

It is known that the Euler genus of $\Pi(h, \numxcaps, c)$ is given by $\eg(\Pi(h, \numxcaps, c)) = 2h+\numxcaps$. Furthermore, Euler genus, orientability and number of cuffs form a complete set of invariants for surfaces with boundary: two surfaces with boundary $\Pi$ and $\Pi'$ are homeomorphic if and only if they have the same Euler genus, orientability and number of cuffs.

The following corollary will come in handy later in \Cref{sec:bounded_genus}.

\begin{proposition} \label{prop:gluing_surfaces_with_bd}
For $i \in [2]$, let $\Pi_i$ denote a surface with boundary with $c_i \geq 1$ cuffs. Consider the surface with boundary $\Pi$ obtained by gluing $\Pi_1$ and $\Pi_2$ along any pair of cuffs, one in $\Pi_1$ and the other in $\Pi_2$. Then, we have $\eg(\Pi) = \eg(\Pi_1) + \eg(\Pi_2)$, $\Pi$ is orientable if and only if both $\Pi_1$ and $\Pi_2$ are, and $\Pi$ has $c_1 + c_2 - 2$ cuffs. This determines $\Pi$ up to homeomorphism.
\end{proposition}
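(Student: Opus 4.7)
The plan is to apply the classification theorem stated in the excerpt: a surface with boundary is determined up to homeomorphism by its Euler genus, its orientability, and its number of cuffs. Once these three invariants are computed for the glued surface $\Pi$, the ``up to homeomorphism'' part of the statement is immediate. The number of cuffs is the easiest invariant: gluing identifies exactly one cuff of $\Pi_1$ with one cuff of $\Pi_2$, and these two cuffs disappear in $\Pi$ while all others survive, giving $c_1+c_2-2$ cuffs.

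Next I would reduce the computation of $\eg(\Pi)$ and the orientability of $\Pi$ to a connected-sum calculation on the capped surfaces $\widehat{\Pi_1}$ and $\widehat{\Pi_2}$. For $i \in [2]$, let $\Pi_i'$ be the surface with a single cuff obtained from $\Pi_i$ by capping the $c_i-1$ cuffs \emph{not} used in the gluing. Then $\Pi$ can equivalently be obtained by first capping those cuffs and then gluing $\Pi_1'$ to $\Pi_2'$ along their remaining cuffs, since capping and gluing operate on disjoint cuffs and therefore commute up to homeomorphism. Capping the single remaining cuff of $\Pi_1' \cup \Pi_2'$ yields $\widehat{\Pi}$, and this is precisely the description of the connected sum $\widehat{\Pi_1}\,\#\,\widehat{\Pi_2}$ (each $\Pi_i'$ is $\widehat{\Pi_i}$ with the interior of a closed disk removed, and the two resulting boundary circles are identified).

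From the classical behaviour of connected sums on closed surfaces, we have $\eg(\widehat{\Pi_1}\,\#\,\widehat{\Pi_2}) = \eg(\widehat{\Pi_1}) + \eg(\widehat{\Pi_2})$ and $\widehat{\Pi_1}\,\#\,\widehat{\Pi_2}$ is orientable if and only if both $\widehat{\Pi_i}$ are. These facts follow directly from the complete classification of closed surfaces that has already been invoked. Combined with the definitional equalities $\eg(\Pi_i)=\eg(\widehat{\Pi_i})$ and ``$\Pi_i$ orientable iff $\widehat{\Pi_i}$ orientable'', this gives the required formulas $\eg(\Pi)=\eg(\Pi_1)+\eg(\Pi_2)$ and the orientability criterion.

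The only delicate point is justifying the commutation of capping with gluing, i.e.\ the identification of $\widehat{\Pi}$ with $\widehat{\Pi_1}\,\#\,\widehat{\Pi_2}$, and checking that the result is independent (up to homeomorphism) of the chosen gluing homeomorphism between the two cuffs. For the former, one observes that the two operations act on disjoint subspaces, so they can be performed in either order. For the latter, it suffices to note that any two orientation-preserving or orientation-reversing homeomorphisms of a circle extend over a collar of each cuff, and any potential dependence on orientation vanishes once we have fixed the Euler genus, orientability and cuff count of each factor---so the classification theorem again closes the loop. This is the only place where topological care is needed; everything else is bookkeeping.
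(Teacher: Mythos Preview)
The paper does not actually supply a proof of this proposition: it is stated as a ``corollary'' of the classification of surfaces with boundary recalled just before it, and left without further argument. Your proposal is correct and fills in precisely the details one would expect---reducing to the connected sum $\widehat{\Pi_1}\,\#\,\widehat{\Pi_2}$ after capping the unused cuffs, and then reading off Euler genus and orientability from the standard behaviour of connected sums. An equally valid (and perhaps slightly more in keeping with the paper's notation) route would be to write $\Pi_i \cong \Pi(h_i,\gamma_i,c_i)$ and observe directly that the glued space is $\Pi(h_1+h_2,\gamma_1+\gamma_2,c_1+c_2-2)$, but this amounts to the same computation.
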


We resume the discussion of the graph $G$ embedded in the surface $\surf$. 

A graph $H$ is called a \emph{surface minor} of $G$ if $H$ is a minor of $G$ embedded in $\surf$ in such a way that the embedding of $H$ can be obtained from that of $G$ by contracting edges, and deleting edges and isolated vertices directly in $\surf$, and up to homemorphism. Whenever we contract an edge \(e\), we adjust the embedding by modifying it in a small neighbourhood of \(e\) disjoint from the vertices and edges non-incident with \(e\).

A simple curve $\ell$ in $\surf$ is said to be {\em $G$-normal} if 
it intersects $G$ only in vertices. 
A {\em noose} is a simple, closed, $G$-normal and non-contractible curve in $\surf$. For a surface $\surf$ that is not a sphere, the {\em face-width} (also known as {\em representativity}) of the embedding of $G$ in $\surf$, denoted by $\fw(G)$, is the minimum of $|\ell \cap V(G)|$ over all nooses $\ell$. The face-width of any graph embedded on the sphere is set to be infinity. 

Notice that cellular embeddings coincide with embeddings of face-width at least $1$. An embedding is said to be \emph{polyhedral} if it is cellular, each face is bounded by a cycle, and every two facial cycles are either disjoint, or have one vertex in common, or one edge in common. The following result characterizes polyhedral embeddings.

\begin{proposition}[{Mohar~\cite[Propositions 3.8 and 3.9]{Moh97}}]\label{faces-cycles}
    Let $G$ be a graph embedded in a surface $\surf$.
    Then, all faces of $G$ are open disks bounded by a cycle if and only if $G$ is $2$-connected and has face-width at least $2$.
    Further, $G$ is polyhedrally embedded if and only if $G$ is $3$-connected and has face-width at least $3$.
\end{proposition}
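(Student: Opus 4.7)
The plan is to prove both equivalences separately using topological ``short-cut'' arguments through face interiors, combined with standard implications of the face-width and connectivity hypotheses. Each equivalence has an easier contrapositive direction (deriving failures of connectivity or face-width from combinatorial failures) and a harder forward direction (deriving the combinatorial condition on facial walks from the abstract hypotheses).

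For part one, the $(\Rightarrow)$ direction is essentially contrapositive: if $G$ is disconnected or has a cut vertex, one directly checks that some facial walk is not a cycle, and if $\fw(G)\le 1$ some face fails to be an open disk. For the $(\Leftarrow)$ direction, assume $G$ is $2$-connected with $\fw(G)\ge 2$. Since $\fw(G)\ge 1$ already forces cellularity, each face $f$ is an open disk whose boundary walk $W_f$ lies in $G$. If $W_f$ were to revisit some vertex $v$, I would join two distinct corners of $f$ incident with $v$ by a simple arc through the interior of $f$, producing a $G$-normal loop meeting $V(G)$ only at $v$; since $\fw(G)\ge 2$, this loop must be contractible, and the disk it bounds then exhibits $v$ as a cut vertex, contradicting $2$-connectivity. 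A similar argument rules out an edge being traversed twice by $W_f$, as such an edge would be a bridge of $G$.

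For part two, given part one the embedding is already cellular with facial cycle boundaries, so I need only handle the new polyhedral condition on pairwise intersections of facial cycles. Assume $G$ is $3$-connected with $\fw(G)\ge 3$. Suppose two distinct facial cycles $C_1,C_2$ share two vertices $u\ne v$. Concatenate an arc from $u$ to $v$ through the interior of the face bounded by $C_1$ with one from $v$ back to $u$ through the face bounded by $C_2$, obtaining a $G$-normal closed curve $\ell$ meeting $V(G)$ exactly in $\{u,v\}$. Since $\fw(G)\ge 3$, $\ell$ is contractible; let $D$ be the disk it bounds. Any vertex in the interior of $D$ would be separated from the rest of $G$ by $\{u,v\}$, contradicting $3$-connectivity, so $D$'s interior meets $G$ in at most the single edge $uv$. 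Iterating this over all pairs of shared vertices collapses $C_1\cap C_2$ to exactly one edge. Conversely, a $2$-cut $\{u,v\}$ produces two distinct facial cycles containing $u$ and $v$ but no common edge (by taking faces on opposite sides of the cut), violating polyhedrality; and a noose of length $2$ through $\{u,v\}$ yields the same configuration directly, so both $3$-connectivity and $\fw(G)\ge 3$ are necessary.

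The main obstacle is the topological bookkeeping in the forward direction of part two, in particular ensuring that when $|V(C_1)\cap V(C_2)|>2$ the short-cut construction can be iterated so as to really collapse $C_1\cap C_2$ to a single edge rather than a longer shared subpath or a disjoint union of shared pieces. This requires carefully using the planarity of each facial disk to arrange the short-cut arcs consistently with the cyclic orders along $C_1$ and $C_2$, possibly through a short induction on the number of shared vertices, together with the observation that any ``pocket'' of $G$ trapped inside an intermediate disk $D$ would again yield a $2$-separation and contradict $3$-connectivity.
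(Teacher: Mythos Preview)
The paper does not supply a proof of this proposition: it is quoted verbatim from Mohar~\cite[Propositions 3.8 and 3.9]{Moh97} and used as a black box. So there is no in-paper argument to compare against; your proposal is an attempt to reconstruct the classical proof, and the outline you give is indeed the standard one.

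Your sketch is essentially correct, and the gap you flag (the ``iteration'' when $|V(C_1)\cap V(C_2)|\ge 3$) is smaller than you seem to fear. Once you have shown, for an arbitrary pair $u,v\in V(C_1)\cap V(C_2)$, that the $G$-normal loop $\ell$ through $f_1$ and $f_2$ bounds a disk $D$ whose interior meets $G$ only in (at most) the edge $uv$, observe that one of the two $u$--$v$ arcs of $C_1$ lies in $D$ and therefore must \emph{be} the single edge $uv$; likewise for $C_2$. Thus every pair of common vertices of $C_1$ and $C_2$ is joined by an edge lying on both cycles. If there were $k\ge 3$ common vertices, then $\binom{k}{2}$ edges among them would all lie on the cycle $C_1$, forcing $k=3$ and $C_1=C_2$ to be a triangle; two distinct faces with identical boundary would then make $G$ itself that triangle, contradicting $3$-connectivity. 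No delicate induction on cyclic orders is needed.

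One small inaccuracy in your converse for part two: a $2$-cut $\{u,v\}$ need not produce two facial cycles through $u,v$ with \emph{no} common edge; when $uv\in E(G)$ they may well share $uv$. What you actually obtain is a pair of facial cycles whose intersection contains $\{u,v\}$ but is not a single edge (e.g.\ one cycle routes from $u$ to $v$ through one bridge and the other routes through both bridges, so $u$ and $v$ are non-adjacent on the second cycle even if $uv\in E(G)$). That still violates polyhedrality, so your conclusion stands.
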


Let \(G\) be a graph embedded in a surface with boundary \(\Pi\). Let \(C\) denote a cycle of \(G\) disjoint from the cuffs of \(\Pi\). \emph{Cutting} along cycle \(C\) is a standard operation that changes both \(\Pi\) and \(G\), as follows (see for instance \cite{CdV21}).

In order to cut \(\Pi\) along \(C\), we remove \(C\) from \(\Pi\) and compactify the resulting space in the natural way, replacing each point \(x \in C\) by two points \(x'\) and \(x''\), see \Cref{fig:cutting_along_C} for an illustration.
This yields a new topological space \(\Pi'\). In case \(C\) is one-sided, \(\Pi'\) is a surface with boundary that has one more cuff than \(\Pi\). In case \(C\) is two-sided, \(\Pi'\) is either a surface with boundary or the disjoint union of two surfaces with boundary. In either case, the (total) number of cuffs of \(\Pi'\) is the number of cuffs of \(\Pi\) plus $2$.
 
We perform a similar operation to \(G\) in order to obtain a graph \(G'\) embedded in \(\Pi'\). 
Each vertex of \(G - V(C)\) has one corresponding vertex in \(G'\), and each vertex \(x_i \in V(C)\) has two corresponding vertices \(x'_i\) and \(x''_i\) in \(G'\). 
Finally, identifying \(x'_i\) and \(x''_i\) for each \(i \in [k]\) provides a graph homomorphism mapping \(G'\) back to \(G\). Notice that (each component of) \(\Pi'\) is contoured by \(G'\), provided that \(\Pi\) is contoured by \(G\).

\begin{figure}[h]
\centering
\includegraphics[height=0.2\textwidth]{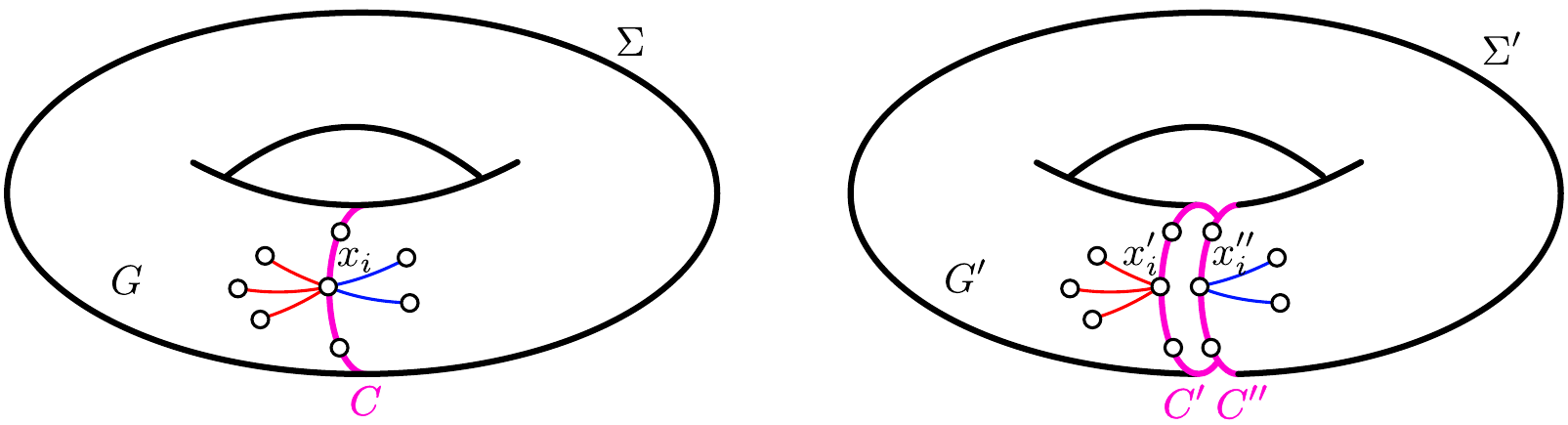}
\caption{Cutting along cycle \(C\) (two-sided case). To the left, graph \(G\) and surface \(\surf\) before cutting. To the right, graph \(G'\) and surface with boundary \(\surf'\) after cutting.}
\label{fig:cutting_along_C}
\end{figure}

The operation of cutting along a single cycle \(C\) generalizes in a straightforward way to \emph{cutting along a set \(\CC\) of (vertex-) disjoint cycles}. This can be performed for instance by iteratively cutting along the cycles of \(\CC\), in any order. 

Consider a graph $G$ embedded in a surface $\surf$, and a collection $\CC$ of disjoint cycles in $G$. Let $\surf'$ denote the disjoint union of surfaces with boundary obtained by cutting $\surf$ along $\CC$. We define a \emph{canonical projection} map $\pi:\surf' \to \surf$ as the natural inverse of cutting, where we map any point of $\surf'$ back to its preimage in $\surf$. This means in particular, that if some point $x \in \surf$ is on a cycle $C$ which we cut along, and split into two points $x',x'' \in \surf'$ by cutting along $C$, these points are mapped back to $x$ by $\pi$, that is, $\pi(x')=\pi(x'')=x$.

Given a graph $G$ embedded in a fixed surface $\surf$, and a surface with boundary $\Pi \subseteq \surf$ contoured by $G$, we denote by $G \cap \Pi$ the subgraph of $G$ embedded in $\Pi$.

\section{The planar case} \label{sec:planar}

The goal of this section is to prove \Cref{thm:planar}. We repeat the definition of rooted $K_{2,t}$-model from the introduction, with a more systematic approach and slightly different notations. 

Given a graph $H$, an {\em $H$-model $M$} in a graph $G$ consists of one connected\footnote{If $G$ is a graph and $X \subseteq V(G)$, we say that $X$ is \emph{connected} if the induced subgraph $G[X]$ is.} vertex subset $M(v) \subseteq V(G)$ for each $v\in V(H)$, such that $M(u)$ and $M(v)$ are disjoint for all distinct $u, v \in V(H)$, and one edge $M(uv) \in E(G)$ with one endpoint in $M(u)$ and the other in $M(v)$ for each edge $uv \in E(H)$.
The sets $M(v)$ for $v \in V(H)$ are called {\em branch sets}.
 
 Let $(G, R)$ denote a rooted graph. Let $H \cong K_{2,t}$ denote the graph with $V(H) = \{x_1,x_2\} \cup \{y_1,\ldots,y_t\}$ and $E(H) = \{x_iy_j \mid i \in [2],\ j \in [t]\}$. Consider a $H$-model $M$ in $G$. We say that $M$ is {\em (properly) rooted} if each of the $t$ branch sets $M(y_1)$, \ldots, $M(y_t)$ contains a vertex of $R$. In this case, we say that $(G,R)$ has a {\em rooted $K_{2,t}$-model}, or equivalently that $(G,R)$ has a {\em rooted $K_{2,t}$ minor}. 
 
Let $(G,R)$ denote a rooted graph, let $T$ and $T'$ denote two edge-disjoint trees in $G$, each with at least $3$ vertices, and let $S := V(T) \cap V(T')$. Assume that every vertex in $S$ is simultaneously a leaf in $T$, a leaf in $T'$, and a member of $R$.
Then $G$ has rooted $K_{2,|S|}$-model built from $T$ and $T'$. We say that $T$ and $T'$ are edge-disjoint trees \emph{touching} at $S$.

Before turning to the proof of \Cref{thm:planar}, we state a result of Bienstock and Dean that we invoke below to reduce to the case where no two roots are incident to the same face. 

\begin{theorem}[Bienstock and Dean~\cite{BD92}] \label{thm:BD92}
Let $(G,R)$ be a plane rooted graph, let $\nu$ denote the maximum number of roots of $(G,R)$ which are pairwise not incident to a common face, and let $\tau$ denote the minimum size of a face cover. Then $\tau \le 27 \nu$.
\end{theorem}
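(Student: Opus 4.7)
Proof plan. The statement is a Helly/Erd\H{o}s--P\'osa-type duality between the covering number $\tau$ and the matching number $\nu$ of the face--root incidence hypergraph of a plane graph. My plan is to prove it by induction on $\nu$, peeling off one unit of $\nu$ at a time at a cost of at most $27$ faces. Standard preprocessing lets us assume that $G$ is $2$-connected with every face bounded by a cycle; moreover, a root lying on a unique face can be trivially covered by that face and removed, so we may additionally assume that every root lies on at least two faces.

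The crux is the following \emph{peeling lemma}: there exists a set of at most $27$ faces of $G$ whose union of incident roots meets every maximum antichain of $(G,R)$. Given this, the inductive step is immediate: add those faces to the cover and remove their incident roots from $R$; the matching number of the resulting instance strictly decreases, so induction yields a cover of size $27(\nu-1)$ for the remainder, and together with the $27$ peeled faces we obtain a cover of size at most $27\nu$. To prove the lemma, one fixes a root $r$ and analyzes the cyclic sequence of faces incident to $r$ together with the roots those faces carry. Using Euler's formula and the fact that every planar graph has a vertex of degree at most $5$, applied to an auxiliary planar graph built from the neighborhood of $r$, one should be able to decompose the local structure around $r$ into a bounded number of ``sectors'' and select $O(1)$ faces per sector that together witness a transversal of every maximum antichain.

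The main obstacle is justifying the peeling lemma with the explicit constant $27$. Bounding the number of faces needed to hit every maximum antichain by a universal constant is nontrivial: a priori, many incomparable maximum antichains can coexist, and covering them simultaneously requires careful exploitation of planarity. I would expect a detailed case analysis on the local face structure, possibly invoking the $5$-degeneracy of planar graphs applied iteratively to an auxiliary graph derived from $(G,R)$. Producing the sharp constant $27$ is the delicate part; a weaker $O(1)$ bound is presumably easier to derive, while obtaining exactly $27$ would likely require following the specific argument of Bienstock and Dean in~\cite{BD92}.
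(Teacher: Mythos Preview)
The paper does not prove \Cref{thm:BD92}; it is quoted from Bienstock and Dean~\cite{BD92} and used as a black box in the proof of \Cref{thm:planar}. There is therefore no ``paper's proof'' to compare against.

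As for your proposal itself: the inductive peeling framework is sound in principle, and one observation implicit in your lemma is correct --- for any fixed root $r$, the set of \emph{all} faces incident to $r$ meets every maximum antichain (either $r$ lies in the antichain, or by maximality some member of it shares a face with $r$). This yields a peel of $\deg(r)$ faces. The entire difficulty, however, is in replacing $\deg(r)$ by a universal constant, and here your proposal has a genuine gap. You suggest building an unspecified ``auxiliary planar graph'' from the neighbourhood of $r$ and invoking $5$-degeneracy, but you neither define this graph nor explain why a low-degree vertex in it produces $O(1)$ faces hitting all maximum antichains. In fact it is not clear that a bounded number of faces \emph{near a fixed root $r$} can always suffice: distinct maximum antichains may meet the faces around $r$ through distinct, pairwise non-overlapping roots, and nothing in your outline bounds how many faces this forces. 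Your final paragraph essentially concedes this and defers to~\cite{BD92} for the real argument. So what you have is a plausible scaffold, but the step that carries all the content is left open.
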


We point out that the constant $27$ in \Cref{thm:BD92} can be improved to $8.38$ using a recent result by Schlomberg~\cite{Schl24}.

\begin{proof}[Proof of \Cref{thm:planar}]
We prove the contrapositive: assuming that $(G,R)$ has no face cover of size at most $f_{\ref{thm:planar}}(t) := 27 t^4$, we want to show that $(G,R)$ has a rooted $K_{2,t}$-model.
By \Cref{thm:BD92}, we can extract from $R$ a subset $R'$ of $t^4$ roots such that no two distinct roots of $R'$ are incident to the same face of $G$. For simplicity, we assume that $R = R'$ below and aim to construct a rooted $K_{2,t}$-model in the case where we have $|R| = t^4$ roots, no two of them incident to the same face. 

Consider any Schnyder embedding of $G$ in $\Delta = \{x \in \R^3_{\ge 0} : x_1 + x_2 + x_3 = 1\}$, see~\Cref{thm:Schnyder_embedding}. Let $T_1$, $T_2$ and $T_3$ denote the three in-arborescences for our embedding. For the sake of simplicity, we assume that $G$ coincides with its embedding. Hence, $a_1 = (1,0,0)$, $a_2 = (0,1,0)$ and $a_3 = (0,0,1)$ are the three special vertices of $G$ relative to which our Schnyder embedding is defined. 

For $i \in [3]$, consider the poset $P_i = (R,\leq_i)$ such that $u \leq_i v$ if and only if $u = v$, or $u_{i-1} < v_{i-1}$ and $u_{i+1} < v_{i+1}$ (recall that indices are computed cyclically). Geometrically, we have $u \leq_i v$ whenever $u$ is in the parallelogram with corners $a_i$ and $v$, but not on the two sides incident to $v$.

We remark that every $3$-connected rooted graph $(G,R)$ with at least one root has a rooted $K_{2,1}$-model: take any root and two of its neighbors. Moreover, every $3$-connected rooted graph $(G,R)$ with at least two roots has a rooted $K_{2,2}$-model: letting $r_1$ and $r_2$ denote two distinct roots, consider three internally disjoint $r_1$--$r_2$ paths; at least two of them have a nonempty interior, yielding the desired rooted $K_{2,2}$-model. Hence, below we may assume that $t \geq 3$, whenever necessary.

\begin{claim} \label{claim:first}
If there is an index $i \in [3]$ and a set of roots $S \subseteq R$ that is a chain in $P_i$, then $(G,R)$ has a rooted $K_{2,|S|}$-model.
\end{claim}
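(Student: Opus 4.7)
My plan is to use the two in-arborescences $T_{i-1}$ and $T_{i+1}$ of the Schnyder embedding, together with property~(iii) of \Cref{thm:Schnyder_embedding}, to construct two edge-disjoint trees touching at $S$; the definition preceding the claim then immediately yields the rooted $K_{2,|S|}$-model. The cases $|S|\le 2$ are already handled by the $K_{2,1}$- and $K_{2,2}$-model remarks made just above the claim, so I may assume $|S|=k\ge 3$ and write $S=\{r_1,\dots,r_k\}$ with $r_1<_i r_2<_i\cdots<_i r_k$. To sidestep a mild degeneracy I also assume that the three distinguished vertices $a_1,a_2,a_3$ have been chosen outside $R$; the case where this fails requires only minor surgery (remove any offending $a_{i\pm 1}$ from $S$, apply the construction, and reinsert it as a separate branch set using a direct adjacency argument).

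For $\ell\in\{i-1,i+1\}$ and $v\ne a_\ell$, let $P_\ell(v)$ denote the directed $v$--$a_\ell$ path in $T_\ell$, and set
\[
    T := \bigcup_{j\in[k]} P_{i-1}(r_j), \qquad T' := \bigcup_{j\in[k]} P_{i+1}(r_j).
\]
Each of $T,T'$ is a tree, being a union of directed paths inside a single in-arborescence that share the common sink $a_{i-1}$, respectively $a_{i+1}$; both contain $S$. The proof reduces to verifying (i) every $r_j$ is a leaf of both $T$ and $T'$, and (ii) $V(T)\cap V(T')=S$. Granted (i) and (ii), any edge in $E(T)\cap E(T')$ would have both endpoints in $V(T)\cap V(T')=S$, contradicting (i); hence $T$ and $T'$ are edge-disjoint trees touching at $S$, with each $r_j$ a leaf and a root, exactly the hypothesis of the construction preceding the claim.

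The engine for (i) and (ii) is the following monotonicity consequence of property~(iii): iterating the inclusion $\mu(\text{parent of }v\text{ in }T_\ell)\in\Pi_\ell(\mu(v))$, any ancestor $u$ of $v$ in $T_\ell$ satisfies $\mu(u)_{\ell-1}\le\mu(v)_{\ell-1}$ and $\mu(u)_{\ell+1}\le\mu(v)_{\ell+1}$, hence also $\mu(u)_\ell\ge\mu(v)_\ell$. The chain relation $r_j<_i r_{j'}$ in turn gives the strict inequalities $\mu(r_j)_{i-1}<\mu(r_{j'})_{i-1}$, $\mu(r_j)_{i+1}<\mu(r_{j'})_{i+1}$, and $\mu(r_j)_i>\mu(r_{j'})_i$. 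Property (i) falls out immediately: an ancestor relation between two distinct elements of $S$ in $T_{i-1}$ (or in $T_{i+1}$) would, via the monotonicity corollary, contradict one of these strict chain inequalities. For (ii), suppose $w\in V(T)\cap V(T')\setminus S$; then $w$ is a proper ancestor of some $r_j$ in $T_{i-1}$ and of some $r_{j'}$ in $T_{i+1}$, giving
\[
    \mu(w)_{i+1}\le\mu(r_j)_{i+1}, \quad \mu(w)_{i-1}\le\mu(r_{j'})_{i-1}, \quad \mu(w)_i\le\min\{\mu(r_j)_i,\mu(r_{j'})_i\}.
\]
Combining the first two with $\mu(w)_1+\mu(w)_2+\mu(w)_3=1$ yields $\mu(w)_i\ge 1-\mu(r_{j'})_{i-1}-\mu(r_j)_{i+1}$; pitting this lower bound against the relevant upper bound on $\mu(w)_i$ collapses each of the three sub-cases --- $j=j'$, $r_j<_i r_{j'}$, and $r_{j'}<_i r_j$ --- to a strict violation of a chain inequality. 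In the sub-case $j=j'$ the three upper bounds jointly force $\mu(w)=\mu(r_j)$, whence the injectivity of the straight-line embedding $\mu$ gives $w=r_j\in S$, against the assumption $w\notin S$.

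The main obstacle I foresee is the coordinate book-keeping in (ii): the three sub-cases each require a different upper bound on $\mu(w)_i$ and a different strict inequality from the chain, and the indices must be tracked cyclically modulo $3$ without slipping a sign. Everything else --- the leaf property (i), the edge-disjointness deduction from (i) and (ii), and the final application of the construction preceding the claim --- is routine once the monotonicity corollary of property~(iii) has been set up.
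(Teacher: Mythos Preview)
Your proposal is correct and follows essentially the same approach as the paper: both take $T$ and $T'$ to be the subtrees of $T_{i-1}$ and $T_{i+1}$ spanned by $S$ and its ancestors, and both argue that these trees touch at $S$ via the containment of ancestors in the parallelograms $\Pi_{i\pm 1}(\cdot)$. The paper packages your coordinate inequalities geometrically, observing that $\Pi_{i-1}(u)\cap\Pi_{i+1}(v)=\emptyset$ for distinct comparable $u,v\in S$ and $\Pi_{i-1}(u)\cap\Pi_{i+1}(u)=\{u\}$, which simultaneously yields your (i) and (ii); your case analysis in (ii) is just the unpacking of these parallelogram intersections. One small remark: your side assumption $a_1,a_2,a_3\notin R$ is unnecessary, since $a_{i-1}$ and $a_{i+1}$ are $\le_i$-incomparable to every other vertex (hence cannot lie in a chain of size $\ge 2$), and $a_i$ causes no trouble in your argument.
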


\begin{proofofclaim}
Let $s := |S|$. By the above remark, we may assume that $s \ge 3$. It follows that $a_{i-1}, a_{i+1} \notin S$. Recall that, for $j \in [3]$, the parallelogram with corners $a_j$ and $u$ is defined as $\Pi_j(u) := \{x \in \Delta : x_{j-1} \le u_{j-1},\ x_{j+1} \le u_{j+1}\}$, see \Cref{sec:background_Schnyder} above. 
For $j \in [3]$, let $\Pi_j(S) := \bigcup_{u \in S} \Pi_j(u)$ denote the union of all parallelograms whose corners are $a_j$ and some $u \in S$.

Notice that $\Pi_{i-1}(S)$ and $\Pi_{i+1}(S)$ only meet in $S$. Indeed, for all $u \in V(G)$, $\Pi_{i-1}(u) \cap \Pi_{i+1}(u) = \{u\}$. And if $u$ and $v$ are distinct, comparable elements of $P_i$, then $\Pi_{i-1}(u)$ and $\Pi_{i+1}(v)$ are disjoint, as well as $\Pi_{i+1}(u)$ and $\Pi_{i-1}(v)$, see \Cref{fig:BM1}.

\begin{figure}[h]
\centering
\includegraphics[width=0.4\textwidth]{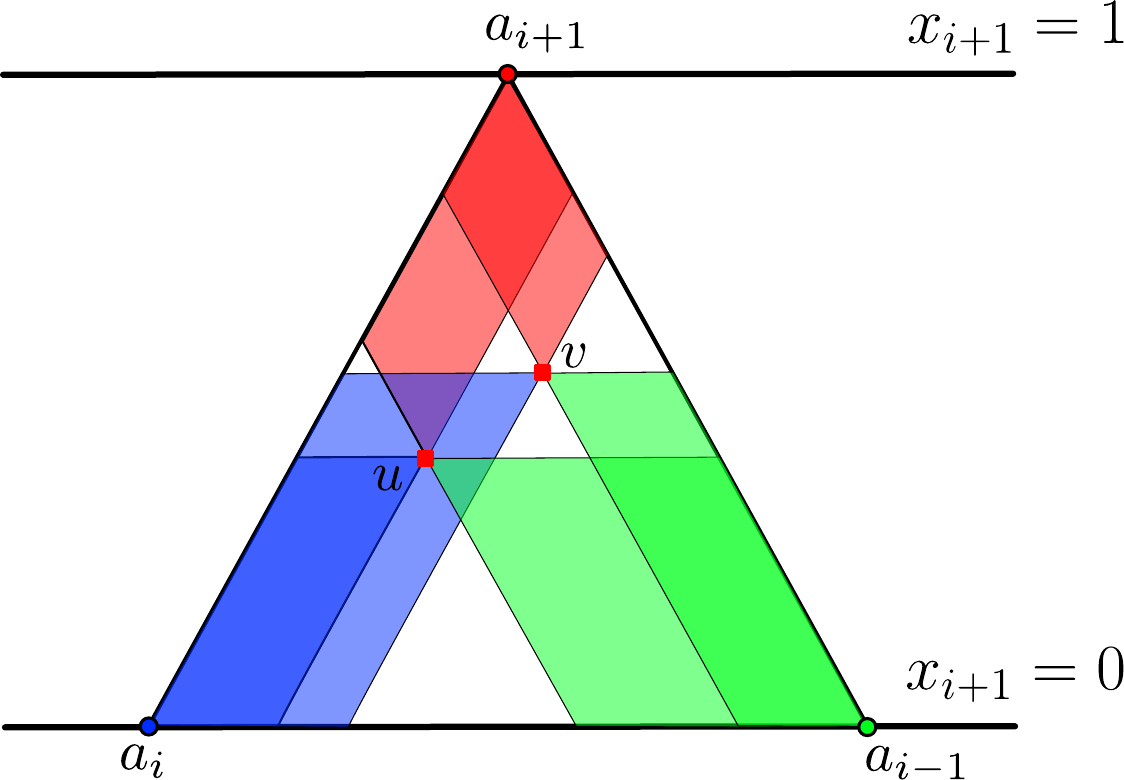}
\caption{Roots $u$ and $v$ such that $u <_i v$, together with the three parallelograms with corner $u$ and the three parallelograms with corner $v$.}
\label{fig:BM1}
\end{figure}

For $j \in \{i-1,i+1\}$, let $\tilde{T}_{j} \subseteq \Pi_{j}(S)$ denote the subtree of $T_{j}$ induced by the vertices in $S$ and their ancestors in $T_{j}$. Since $\Pi_{i-1}(S) \cap \Pi_{i+1}(S) = S$ and since $a_{i-1}, a_{i+1} \notin S$, we conclude that $T := \tilde{T}_{i-1}$ and $T' := \tilde{T}_{i+1}$ are two edge-disjoint trees of $G$ touching at $S$. Hence, $(G,R)$ has a rooted $K_{2,s}$-model. \QEDclaim
\end{proofofclaim}

\begin{claim} \label{claim:second}
If there is an index $i \in [3]$ and a set of roots $S \subseteq R$ such that $u_i$ is constant for all $u \in S$, then $(G,R)$ has a rooted $K_{2,|S|}$-model.
\end{claim}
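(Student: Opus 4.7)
The plan is to build two edge-disjoint trees $T$ and $T'$ in $G$, each with at least three vertices, whose vertex sets intersect precisely in $S$ and with $S$ being the common leaf set of both trees; the desired rooted $K_{2,|S|}$-model then follows as in the preamble to this claim. Let $c$ denote the common value of $x_i$ on $S$, and order the roots as $u^{(1)}, \ldots, u^{(s)}$ by increasing $x_{i-1}$-coordinate. I may assume $s \ge 3$. A short computation shows that $S$ is an antichain in all three posets $P_1, P_2, P_3$: for distinct $u, v \in S$, the equality $u_i = v_i$ together with $u_{i-1} + u_{i+1} = 1 - c = v_{i-1} + v_{i+1}$ rules out the two strict inequalities required for comparability in any $P_j$. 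Thus Claim 1 does not apply directly.

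For the first tree, take $T := \tilde T_i$, the subtree of $T_i$ induced by $S$ and all of its $T_i$-ancestors. The parent-parallelogram property $\Pi_i$ forces $V(T) \subseteq \{x_i \ge c\}$. Moreover, since $\Pi_i(u) \cap \{x_i = c\} = \{u\}$ for each $u \in S$, the $T_i$-parent of any root has $x_i > c$ strictly, so $V(T) \cap \{x_i = c\} = S$. Combined with the antichain property of $S$ in $P_i$---which implies that no root in $S$ is a $T_i$-ancestor of another---this shows that $S$ is exactly the leaf set of $T$.

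For the second tree, the natural candidate is $T' := \tilde T_{i-1}$, the analogous subtree in $T_{i-1}$. A parallelogram computation mirroring that of Claim 1 yields $\Pi_i(S) \cap \Pi_{i-1}(S) = S$: any point in both unions must have $x_i = c$ and coincide with some root. Consequently, $V(T) \cap V(T') = S$; and since $T \subseteq \{x_i \ge c\}$ while $T' \subseteq \{x_i \le c\}$, any edge common to both trees would connect two roots. In the triangulation case this is ruled out, as $T_i$ and $T_{i-1}$ are then edge-disjoint; a standard triangulation-completion argument should handle the general $3$-connected case.

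The main obstacle, which I expect to be the technically hardest step, is verifying that $S$ is the leaf set of $T'$. In contrast to $T_i$, the antichain property of $S$ in $P_{i-1}$ does not preclude a root from being the $T_{i-1}$-parent of another root: specifically, the $T_{i-1}$-parent of $u^{(k)}$ can be some $u^{(\ell)}$ with $\ell > k$ (since $\Pi_{i-1}(u^{(k)})$ contains $u^{(\ell)}$ whenever $u^{(\ell)}_{i+1} \le u^{(k)}_{i+1}$), in which case $u^{(\ell)}$ would become internal in $\tilde T_{i-1}$. The plan to overcome this is to perform a local surgery: for each such ``horizontal'' root-root edge appearing in $\tilde T_{i-1}$, reroute the $T_{i-1}$-path of the lower root through $T_{i+1}$ instead, exploiting that $u^{(\ell)} \notin \Pi_{i+1}(u^{(k)})$ when $\ell > k$, so the $T_{i+1}$-path from $u^{(k)}$ avoids the offending chain. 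A careful case analysis should then confirm that the resulting $T'$ is a valid tree inside $T_{i-1} \cup T_{i+1}$, remains contained in $\{x_i \le c\}$, intersects $T$ exactly in $S$, and has $S$ as its leaf set, completing the construction of the desired rooted $K_{2,|S|}$-model.
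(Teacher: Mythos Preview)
Your first tree $T=\tilde T_i$ is set up correctly, and you rightly spot that the naive second tree $\tilde T_{i-1}$ may leave some $u^{(\ell)}$ internal. But the fix you sketch has a genuine gap. You never invoke the standing hypothesis, established just before Claim~\ref{claim:first}, that no two roots are incident to a common face. Since adjacent vertices in a plane graph always share a face, this hypothesis already forbids the ``horizontal root--root edge'' you isolate (a root being the \emph{immediate} $T_{i-1}$-parent of another root), so your surgery as written is vacuous. The actual obstacle is that $u^{(\ell)}$ may be a \emph{non-immediate} $T_{i-1}$-ancestor of $u^{(k)}$ along $\{x_i=c\}$; for this your rerouting plan is underspecified, and its natural extension (send $u^{(k)}$ entirely through $T_{i+1}$ whenever its $T_{i-1}$-path meets another root) can fail symmetrically: the $T_{i+1}$-path from $u^{(k)}$ lies in $\Pi_{i+1}(u^{(k)})$, which contains every $u^{(m)}$ with $m<k$, so after rerouting you may now pass through some such $u^{(m)}$. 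Iterating between the two trees gives no termination argument, and the final ``careful case analysis'' is not carried out.

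The paper does not start from $\tilde T_{i-1}$ at all. For each $u\in S$ it builds a path $P(u)$ that, while still on the line $\{x_i=c\}$, steps to any neighbor with $x_i<c$ if one exists and otherwise steps to the $T_{i-1}$-parent (which then also lies on the line); once below the line it simply follows $T_{i-1}$ to $a_{i-1}$. The no-common-face hypothesis is precisely what makes this work: if $P(u)$ reached another root $u'$ without dropping, every intermediate vertex on that horizontal segment would have no neighbor strictly below, forcing $u$ and $u'$ to lie on the single face directly beneath the segment, a contradiction. Setting $T':=\bigcup_{u\in S}P(u)\subseteq\{x_i\le c\}$, one gets $V(T)\cap V(T')=S$ with every root a leaf in both trees; edge-disjointness is then immediate (any common edge would join two leaves in a tree on at least three vertices), so no triangulation-completion argument is needed.
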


\begin{proofofclaim}
Let $c$ denote the constant such that $u_i = c$ for all $u \in S$, and let $s := |S|$. As in the previous claim, we may assume without loss of generality that $s \ge 3$. In particular, $c < 1$. Since no two roots are incident to the same face, we may assume furthermore that $c > 0$.

We let $T := \tilde{T}_{i}$ denote the subtree of $T_i$ induced by the roots in $S$ and all their ancestors in $T_i$. Notice that $T \subseteq \Pi_i(S) \subseteq \{x \in \Delta : x_i > c\} \cup S$. 

In order to get our second tree $T'$, we define for each root $u \in S$ a special path $P = P(u)$ connecting $u$ to, say, $a_{i-1}$. The path $P$ is constructed iteratively as follows, see \Cref{fig:BM2} for an illustration. Initially, we start at $v = u$. At each iteration, we check whether the current vertex $v$ has a neighbor $w$ with $w_i < c$. If this is the case, we add the edge $vw$ to the path $P$ and move to vertex $w$. Otherwise, we let $w$ denote the parent of $v$ in $T_{i-1}$, add the edge $vw$ to $P$ and move to $w$. In this latter case we have $v_i = w_i = c$, so the edge is horizontal in \Cref{fig:BM2}. Once the current vertex $v$ has $v_i < c$, path $P$ simply proceeds by following $T_{i-1}$ all the way to $a_{i-1}$. Since there are no two roots incident to the same face, path $P = P(u)$ enters $\{x \in \Delta : x_i < c\}$ before it visits any other root $u' \in S$. The tree $T'$ is simply the union of all paths $P(u)$ for $u \in S$.

By construction, $T \subseteq \{x \in \Delta : x_i > c\} \cup S$ and $T' \subseteq \{x \in \Delta : x_i \le c\}$. Hence, $T$ and $T'$ are two edge-disjoint trees touching at $S$, and $(G,R)$ has a once again a rooted $K_{2,s}$-model.
\QEDclaim
\end{proofofclaim}

\begin{figure}[h]
\centering
\includegraphics[width=0.4\textwidth]{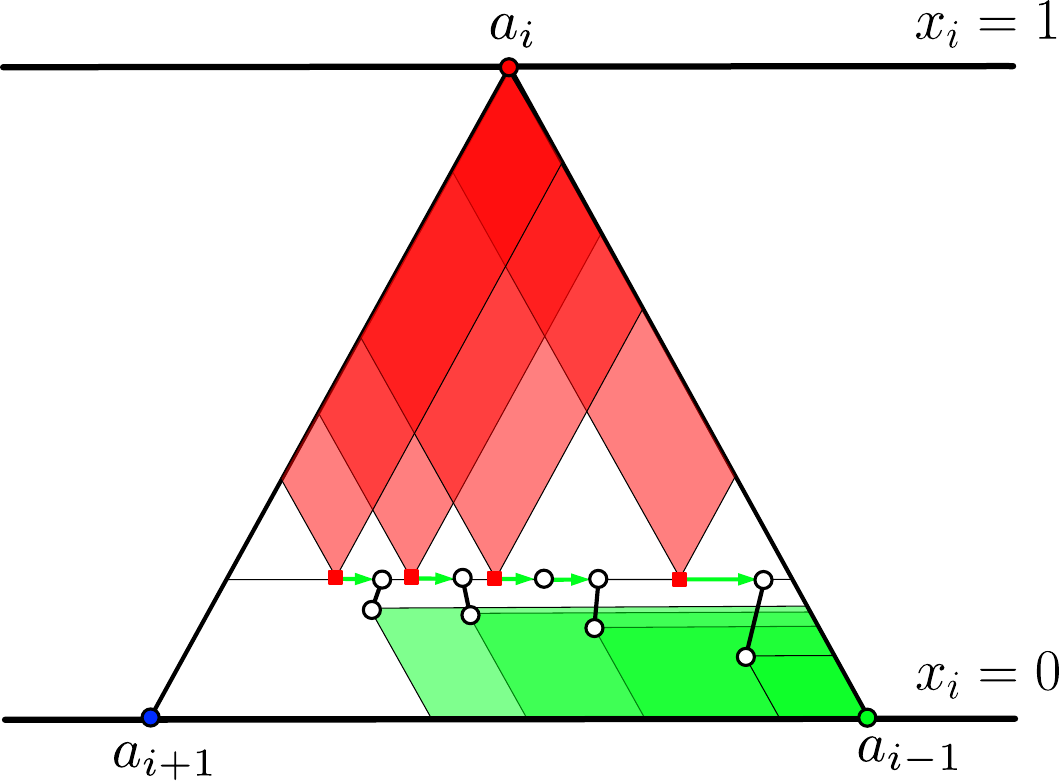}
\caption{Set of roots $S$ such that $u_i = c$ where $c$ is a constant, shown with $\Pi_i(S)$ (red parallelograms) and $\Pi_{i-1}(V)$ where $V := \{v(u) : u \in S\}$ and $v(u)$ is the first vertex of $P(u)$ that has $v_i < c$ (green parallelograms).}
\label{fig:BM2}
\end{figure}

To conclude the proof, we use three times in a row Mirsky's theorem: each poset of height $h$ can be partitioned into $h$ antichains.
Recall that the ground set of each poset $P_i$ is the root set $R$. If there is an index $i \in [3]$ such that $P_i$ contains a chain of size $|R|^{1/4} = t$, we apply \Cref{claim:first}. Now, assume that no $P_i$ contains a chain of size $t$. Then, one can find an antichain $S_1$ of size $|R|^{3/4}=t^{3}$ in $P_1$. Next, one can find a subset $S_2$ of $S_1$ with size $|R|^{1/2} = t^2$ which is an antichain in $P_2$. Finally, one can find a subset $S := S_3$ of $S_2$ with size $|R|^{1/4} = t$ which is an antichain in $P_3$. If $|S| \ge 4$, then it is easy to see that the vertices $u \in S$ are such that $u_i$ is constant for some $i \in [3]$. Hence, \Cref{claim:second} applies. The remaining case $|S| \le 3$ can be treated by adapting the arguments of \Cref{claim:second} in a ad-hoc fashion.
\end{proof}

For $t \in \mathbb{Z}_{\ge 6}$ even, let the {\em windmill of parameter $t$} be the $3$-connected, planar, rooted graph $W_t$ defined as follows. Let $p := t/2$ and $q := \lfloor t/5 \rfloor$. We start with a cycle $u_1v_1u_2v_2 \cdots u_{p}v_{p}u_1$ of length $2p = t$, and add to it a vertex $z$ adjacent to every vertex of the cycle. Then, for each $i \in [p]$, we build a vane of the windmill by adding three paths $a_{i,1}a_{i,2} \cdots a_{i,2q}$, $b_{i,1}b_{i,2} \cdots b_{i,2q}$ and $c_{i,1}c_{i,2} \cdots c_{i,2q}$, where $a_{i,1} = u_i$, $b_{i_1} = v_i$ and $c_{i,1} = u_{i+1}$ (the other vertices being distinct from those added to the graph so far). Next, for each $i \in [p]$ and each $j \in [2q]$ with $j \geq 2$, we add the edges $a_{i,j}b_{i,j}$ and $b_{i,j}c_{i,j}$. Also, for each $i \in [p]$, we add the edge $a_{i,2q}c_{i,2q}$. Finally, for each $i \in [p]$ and each odd $j \in [2q]$, the vertex $b_{i,j}$ is marked as a root.

\begin{proposition} \label{prop:windmill}
For all $t \ge 6$ even, the windmill $W_t$ has no rooted $K_{2,t}$ minor, but every face cover of $W_t$ has size $\Omega(t^2)$.
\end{proposition}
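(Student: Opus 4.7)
The plan is to handle the two assertions separately.

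For the face-cover lower bound, I would analyze the natural planar embedding of $W_t$: $z$ lies inside the central $2p$-cycle, and each vane is drawn outward as a ladder of rung quadrilaterals capped by a triangle. The faces fall into three types: the $2p$ triangles around $z$ (each incident to exactly one root $v_i$); inside each vane, the $2(2q-1)$ rung quadrilaterals and the triangular tip (each incident to at most one root $b_{i,j}$, by the parity condition on the $b_{i,\cdot}$-roots, and with the tip triangle containing only the non-root $b_{i,2q}$); and the single outer face (incident to no root). Hence every face of $W_t$ meets at most one root, so any face cover has size at least $|R| = pq = \Omega(t^2)$.

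For the no-rooted-$K_{2,t}$-minor claim, I plan to argue by contradiction. Assume that $(X_1, X_2; Y_1, \ldots, Y_t)$ is such a model and let $\alpha_i$ denote the number of branch sets $Y_j$ whose root lies in vane $i$, so that $\sum_i \alpha_i = t = 2p$. The key lemma uses the $3$-vertex separator $S_i := \{u_i, v_i, u_{i+1}\}$ between the interior of vane $i$ and the rest of $W_t$. Assuming $X_1 \cup X_2$ is disjoint from all vane interiors (a configuration to which the general case can be reduced by rerouting the model), every $Y_j$ with root in vane $i$ must meet $S_i$, because $Y_j$ is connected and its edges to $X_1, X_2$ leave the vane. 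Pairwise disjointness of the $Y_j$'s, together with $\sum_i \alpha_i = 2p$, then turns this into a bijection from $\{Y_1, \ldots, Y_t\}$ onto the $2p$ cycle vertices. A short cyclic combinatorial argument, using that $v_i$ can only be attributed to vane $i$ while each $u_i$ can be attributed to vane $i-1$ or vane $i$, forces each vane $i$ to be in a saturating configuration of one of a few types, most importantly $\alpha_i = 2$ uniformly with either ``type A'' ($v_i$ and $u_i$ attributed to vane $i$) or ``type B'' ($v_i$ and $u_{i+1}$ attributed to vane $i$).

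To refute each such saturating pattern, I would examine the external neighbours of $v_i$ after removing all the $Y_j$'s from $W_t$: these are only $z$ and $b_{i,2}$, so the branch set containing $v_i$ being adjacent to both $X_1$ and $X_2$ forces $\{z, b_{i,2}\}$ to be split between $X_1$ and $X_2$ for every $i$. Consequently the one of $X_1, X_2$ not containing $z$ must contain every $b_{i,2}$, $i \in [p]$. However, after deleting the $Y_j$'s and $z$, the graph splits into $p$ pairwise disconnected vane-interior components (since the cross-vane connector vertices $u_i$ are all absorbed into the $Y_j$'s), and each $b_{i,2}$ lies in a distinct such component, contradicting the connectedness of $X_1$ or $X_2$. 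The main obstacle I foresee is handling the ``mixed'' saturating patterns that arise for $q \ge 3$ (some $\alpha_i = 3$ balanced by $\alpha_i = 1$) and the reduction to the ``$X_1 \cup X_2$ in the hub'' configuration; both require a further, more delicate case analysis along the same lines.
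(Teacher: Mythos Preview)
Your face-cover lower bound is correct and coincides with the paper's: both observe that no face of $W_t$ is incident with two distinct roots, so any face cover has size at least $|R|=pq=\Omega(t^2)$.

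For the absence of a rooted $K_{2,t}$ minor, however, your plan diverges from the paper and carries a real gap. The step you yourself flag --- the ``reduction to the $X_1\cup X_2$ in the hub configuration'' by ``rerouting the model'' --- is not justified and does not seem achievable in general. A branch set $M(x_i)$ may well run along the $a$- or $c$-spine of a vane (these vertices are not roots), and a central branch set whose root is $b_{i,j}$ for some deep odd $j$ can be adjacent to $M(x_i)$ only through such spine vertices; there is no evident way to pull $M(x_i)$ back into the hub while preserving all the required adjacencies. Once that reduction fails, the subsequent pigeonhole on the separators $S_i$ and the saturating-pattern case analysis (which you already note is incomplete) never get off the ground. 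Even in the special case where the reduction did hold, your bijection would force all $2p$ cycle vertices into the $Y_j$'s, leaving $X_1\cup X_2\subseteq\{z\}$ --- an immediate contradiction, so the $b_{i,2}$ argument is superfluous; but this only underlines that the hard content is entirely in the unjustified reduction.

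The paper takes a different and much shorter route, working with the \emph{outer} face cycle $C$ rather than the hub. First, at most two central branch sets can meet $C$, since three roots on a common face together with $M(x_1),M(x_2)$ would give a planar $K_{3,3}$. Deleting those and enlarging $M(x_1),M(x_2)$ to cover $V(W_t)$, planarity forces $M(x_1)\cap V(C)$ and $M(x_2)\cap V(C)$ to be two complementary arcs of $C$; hence at most two vanes have their outer boundary split between the two arcs. A direct count then bounds the number of remaining central branch sets by $(p-2)+2q\le \tfrac{t}{2}+2\lfloor\tfrac{t}{5}\rfloor-2<t-2$, a contradiction. I would recommend abandoning the hub-separator strategy in favour of this outer-cycle argument.
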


\begin{proof}
Observe that $W_t$ has $pq = \frac{t}{2} \lfloor \frac{t}{5} \rfloor = \Omega(t^2)$ vertices marked as roots, no two of which are incident to the same face. Thus, every face cover of $W_t$ has size at least $\Omega(t^2)$.

Suppose, for the sake of contradiction, that $W_t$ has a rooted $K_{2,t}$-model $M$. That is, we have $t + 2$ disjoint connected sets $M(x_1)$, $M(x_2)$, $M(y_1)$, \ldots, $M(y_t)$ such that there is an edge $M(x_iy_j)$ between $M(x_i)$ and $M(y_j)$ for all $i \in [2]$ and all $j \in [t]$, and such that each one of the \emph{central} branch sets $M(y_1)$, \ldots, $M(y_t)$ contains a root.

Let $C \subseteq W_t$ denote the boundary of the outer face. First, suppose that at least three central branch sets contain a vertex of $C$. After permuting the central branch sets, we may assume that $M(y_j)$ contains a vertex $r_j \in V(C)$, for each $j \in [3]$. By redefining the root set of $W_t$ to $\{r_1,r_2,r_3\}$, we obtain a rooted planar graph with a rooted $K_{2,3}$ model where all the roots are incident to the outer face. This yields a planar drawing of $K_{3,3}$, a contradiction. 

Hence, we may assume that at most two central branch sets contain a vertex of $C$. By deleting at most two branch sets from $M$ and reindexing if necessary, we modify $M$ to a rooted $K_{2,t-2}$ model where each central branch set is disjoint from $C$. 

By enlarging $M(x_1)$ and $M(x_2)$ we can make sure that $M$ covers the whole vertex set of $W_t$. This yields a partition of $V(C)$ into two \emph{connected} sets $X_1 := M(x_1) \cap V(C)$ and $X_2 := M(x_2) \cap V(C)$ (possibly one is empty and the other the whole vertex set of $C$). The fact that $X_1$ and $X_2$ are connected follows easily from planarity (for a detailed argument, see \cite[Lemma 74]{AFJKSWY25}). Notice that at most two vanes contain vertices from both $X_{1}$ and $X_{2}$. It follows that the number of central branch sets of $M$ is bounded by $(p - 2) + 2q \le \frac{t}{2} + 2 \frac{t}{5} -2 < t - 2$, a contradiction.
\end{proof}

\section{The bounded genus case} \label{sec:bounded_genus}

In this section, we prove \Cref{thm:bounded_genus}. Our general approach is to reduce to the planar case and apply \Cref{thm:planar}. We treat the projective planar case separately in \Cref{sec:projective_plane} relying in particular on a result of Gitler, Hliněný, Leaños and Salazar~\cite{GHLS08} concerning unavoidable minors in projective plane graphs with large face-width. Section \ref{sec:general} treats surfaces of Euler genus $g \geq 2$, with the help of a planarization result due to Yu~\cite{Yu97}.

\subsection{The projective planar case} \label{sec:projective_plane}

Let $r \ge 3$ be an integer. The \emph{diamond grid} $D_r$ is the planar graph whose vertices are all ordered pairs of integers $(i,j) \in \mathbb{Z}^2$ such that $|i| + |j| \le r$, $i + r \equiv j \equiv 1 \pmod{2}$, and whose edges are all unordered pairs of vertices $(i,j)(i',j')$ such that $|i-i'| + |j-j'| = 2$.

The \emph{projective diamond grid} $P_r$ is obtained by identifying as a single vertex every pair of vertices $(i,j)$ and $(-i,-j)$ such that $|i| + |j| = r$. (Notice that $P_r$ may contain a few parallel edges.) As is easily seen, this graph can be embedded in the projective plane obtained from the closed disk $\Delta = \{(x,y) \in \mathbb{R}^2 \mid |x| + |y| \le r\}$ by identifying all pairs of diametrically opposite points $(x,y)$ and $(-x,-y)$ on its boundary.  

\begin{theorem}[Gitler \emph{et al.}~\cite{GHLS08}] \label{thm:GHLS}
Let $G$ be a graph embedded in the projective plane with face-width at least $r$. Then $G$ has the projective diamond grid $P_r$ as a surface minor.
\end{theorem}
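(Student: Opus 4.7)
The plan is to exploit the face-width hypothesis to extract nested structures in the projective-plane embedding of $G$ and then reorganize them into the diamond grid pattern. Because the projective plane cut along any one-sided noose becomes a disk, the natural picture is one ``central'' non-contractible cycle surrounded by many concentric cycles, crossed transversally by many radial paths; my job is to find these objects inside $G$ and then contract/delete edges directly in the surface to produce $P_r$ as a surface minor.

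First, I would fix a shortest non-contractible cycle $C_0$ of $G$; by face-width $C_0$ has at least $r$ vertices. Cutting the projective plane along $C_0$ yields a closed disk $\widehat{\Delta}$ whose boundary traces $V(C_0)$ twice in the antipodal pattern, and $G$ becomes an embedded subgraph $G'$ of $\widehat{\Delta}$ contoured on $\partial\widehat{\Delta}$. Now I would run a breadth-first search in $G'$ from $\partial\widehat{\Delta}$ and set $L_i$ to be the vertices at graph-distance $i$ from the boundary. The key geometric observation is that if two consecutive BFS levels were ``close'' in the wrong sense, one could lift a short arc through them to a noose in the projective plane meeting $G$ in fewer than $r$ vertices, contradicting $\fw(G)\ge r$. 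This forces the existence of $\Omega(r)$ nested cycles $D_1,\dots,D_{\lfloor r/2\rfloor}$, each separating $\partial\widehat{\Delta}$ from a non-empty ``center'' region of $\widehat{\Delta}$.

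Second, I would construct the radial structure. Starting from a well-chosen set of $\Omega(r)$ vertices on $\partial\widehat{\Delta}$ (respecting the antipodal pairing coming from $C_0$), I would trace BFS paths inward to the innermost level, pruning to obtain pairwise internally disjoint paths $P_1,\dots,P_m$, each crossing every $D_j$ exactly once. When one ``reglues'' the disk into the projective plane, the antipodal identification on $\partial\widehat{\Delta}$ collapses the endpoints of pairs of radial paths, giving exactly the identification pattern present at the boundary of $P_r$. After contracting edges inside each $D_j$ and inside each $P_\ell$ that fall outside the grid pattern, and deleting the remaining irrelevant parts of $G$, one reads off a copy of $P_r$ inside $G$, drawn in the projective plane as prescribed. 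Because all operations are performed inside $\surf$, the result is a surface minor rather than merely an abstract minor.

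The main obstacle is making the concentric cycles $D_j$ and the radial paths $P_\ell$ fit together into precisely the diamond grid pattern — in particular, controlling the number of intersections of each $P_\ell$ with each $D_j$ and making sure the boundary identification gives the right antipodal matching. This is typically the place where a delicate counting or a tailored grid-extraction lemma is needed; Gitler, Hliněný, Leaños and Salazar handle it by a careful inductive analysis of embeddings in the projective plane, which is what I would follow in the full write-up.
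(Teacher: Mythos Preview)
This theorem is not proved in the paper at all: it is quoted verbatim as a result of Gitler, Hlin\v{e}n\'y, Lea\~{n}os and Salazar~\cite{GHLS08} and used as a black box in the proof of \Cref{proj-planar-cover}. There is therefore no ``paper's own proof'' to compare your proposal against.

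As for the sketch itself, your outline (cut along a shortest non-contractible cycle, run BFS from the boundary of the resulting disk to find many nested separating cycles, then find transversal radial paths respecting the antipodal identification) is the natural picture, and is indeed broadly the strategy in~\cite{GHLS08}. However, what you have written is a plan rather than a proof: the two substantive steps --- showing that BFS from the boundary really produces $\lfloor r/2\rfloor$ nested cycles (this needs a careful argument relating face-width to the BFS depth, not just a vague ``short noose'' observation), and arranging the radial paths so that each meets each nested cycle exactly once with the correct cyclic and antipodal pattern --- are precisely the content of the theorem, and you explicitly defer both to~\cite{GHLS08} at the end. In particular, your claim that a failure of two consecutive BFS levels to be ``far apart'' would yield a short noose is not obviously correct as stated and would need a precise formulation. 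So the proposal is a reasonable roadmap but does not constitute an independent proof; since the paper itself does not attempt one either, there is no discrepancy to report.
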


The aim of this section is to establish the special case $g = 1$ of \Cref{thm:bounded_genus}. Our strategy is to invoke \Cref{thm:GHLS} to argue that $G$ contains a sufficiently fine projective diamond grid, and then use this grid to define a surface cover of $\surf$ (see \Cref{def:surface_cover}) of size $k = 3$. Roughly speaking, we find disks $\Pi_1$, \ldots, $\Pi_k$ contoured by $G$, making sure that each $\Pi_i$ is ``protected'' by a slightly larger disk $\Pi^+_i$ also contoured by $G$. (More precisely, we require that $(\Pi_i,\Pi^+_i)$ is a nested pair of disks, see \Cref{def:nested_pair}).

For each $i \in [k]$, we construct a planar minor $H_i$ of $G$ that agrees with $G$ in the neighborhood of $\Pi_i$ (see \Cref{def:agree}), with root set $R_i := R \cap \Pi_i$. We show that $H_i$ is $3$-connected, and that every face cover of $(H_i,R_i)$ can be lifted to a face cover of $(G,R_i)$ of the same size. 

Hence, we can invoke \Cref{thm:planar} to cover the roots of $(G,R_i)$ with a small number of faces, for each $i \in [k]$. Taking the union of these $k$ face covers yields a face cover of $(G,R)$ of size at most $k \cdot f_{\ref{thm:planar}}(t) = O(t^4)$.

We point out that the above strategy is essentially the one we utilize for any Euler genus $g \geq 1$. The main difference lies in the way we construct the surface cover. This is why a large part of the definitions and results in this section are stated in full generality.

\begin{definition} \label{def:nested_pair}
    Let $G$ be a graph embedded in a fixed surface $\surf$.
    Let $\Pi,\Pi^+\subseteq\surf$ be surfaces with boundary contoured by $G$.
    We say that $\Pi$ and $\Pi^+$ are \emph{nested} if $\Pi$ is contained in $\intt(\Pi^+)$, and $\Pi^+ \setdiff \intt(\Pi)$ is a disjoint union of spheres with boundary, in which each component has exactly one cuff in common with $\Pi$.
\end{definition}

We point out that if $\Pi$ and $\Pi^+$ are as above, then $\Pi^+$ can be obtained from $\Pi$ by gluing a sphere with boundary (and at least one cuff) along \emph{each} cuff of $\Pi$. By 
\Cref{prop:gluing_surfaces_with_bd}, $\Pi$ and $\Pi^+$ have the same Euler genus and the same orientability, implying that $\widehat{\Pi}$ and $\widehat{\Pi^+}$ are homeomorphic. However, the number of cuffs of $\Pi^+$ and $\Pi$ can differ.

\begin{definition} \label{def:surface_cover}
    Let $G$ be a graph embedded in a fixed surface $\surf$.
    A \emph{surface cover} of $\surf$ with respect to $G$ is a collection $\left\{(\Pi_i,\Pi_i^+)\right\}_{i\in[k]}$ of pairs of nested surfaces with boundary contained in $\surf$, contoured by $G$ and such that $\bigcup_{i\in[k]} \Pi_i=\surf$.
\end{definition}

\begin{lemma}\label{proj-planar-cover}
    Let $G$ be a graph embedded in the projective plane with face-width at least $16$.
    Then there is a surface cover $\left\{(\Pi_i,\Pi_i^+)\right\}_{i \in [3]}$ of the projective plane with respect to $G$.
\end{lemma}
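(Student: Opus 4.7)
The strategy is to apply \Cref{thm:GHLS} to extract a copy of the projective diamond grid $P_{16}$ as a surface minor of $G$, and then construct the surface cover directly using the concentric structure of $P_{16}$. I view $P_{16}$ as embedded in $D_{16}/\!\sim$, the quotient of the Euclidean diamond $D_{16}$ by antipodal boundary identification; each of the three surfaces $\Pi_i$ (and each $\Pi_i^+$) will be bounded by a cycle of $P_{16}$, which in turn lifts to a cycle of $G$ via the surface-minor correspondence by choosing a path through each branch set.

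The first pair $(\Pi_1,\Pi_1^+)$ is straightforward: fix $0 < k_1 < k_1^+ < 16$ (say $k_1=4$ and $k_1^+ = 6$), and let $\Pi_1 := \{|i|+|j| \le k_1\}$ and $\Pi_1^+ := \{|i|+|j| \le k_1^+\}$. Both are closed disks in the projective plane bounded by concentric cycles of $P_{16}$, and $\Pi_1^+ \setdiff \intt(\Pi_1)$ is an annulus sharing only the inner cuff with $\Pi_1$, so this is a nested pair.

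The main work lies in covering the M\"obius-band complement of $\Pi_1$ in the projective plane by the remaining two pairs. The topological fact I exploit is that a M\"obius band, parameterized as $[0,1]^2$ with $(0,y) \sim (1,1-y)$, is covered by two overlapping simply-connected rectangles, say $[0,0.6]\times[0,1]$ and $[0.4,1]\times[0,1]$. To realize this in $P_{16}$, I would take each of $\Pi_2,\Pi_3$ to be a disk whose boundary cycle consists of four arcs: two along concentric cycles of $P_{16}$ (one in the interior of $\Pi_1^+$ and one along the outer non-contractible cycle, corresponding to $\{|i|+|j|=16\}$ in $D_{16}$) and two approximately-radial paths connecting them. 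The two disks are rotated relative to each other along the M\"obius core so that, together, they wrap around the M\"obius band and cover it; the enlarged $\Pi_2^+, \Pi_3^+$ come from replacing the concentric arcs by slightly wider ones and the radial paths by parallel ones pushed further inward.

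The hard part, I expect, will be the careful verification that each sectorial cycle of $P_{16}$ actually bounds a disk in the projective plane, rather than a M\"obius band or an annulus. The key point is that each such cycle traverses the antipodal identification exactly once in the correct way, so that its interior remains simply connected. Once this is in place, checking that the three $\Pi_i$ cover the projective plane, that each pair $(\Pi_i,\Pi_i^+)$ is nested in the sense of \Cref{def:nested_pair}, and that every cycle involved lifts to $G$ reduces to routine verification.
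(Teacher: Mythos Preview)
Your overall strategy---apply \Cref{thm:GHLS} to pass to $P_{16}$, build the cover there, then lift---matches the paper. But your concrete construction of $\Pi_2,\Pi_3$ does not work as written, and the paper takes a different route. In the paper, one locates inside $P_{16}$ a \emph{cellularly embedded $K_4$-subdivision} (branch vertices at $(\pm 5,\pm 5)$, using the lines $j=\pm 5$ together with the segments $i=\pm 5$, $|j|\le 5$); its three faces are automatically open disks and together tile the projective plane, giving $\Pi_1,\Pi_2,\Pi_3$ at once. Because the subdivision has maximum degree~$3$, it lifts verbatim to a $K_4$-subdivision in $G$, and each $\Pi_i^+$ is obtained from a single contractible cycle of $P_{16}$ enclosing the corresponding face. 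This avoids all the delicate issues you anticipate.

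Here is the specific gap in your sketch. Your M\"obius-band decomposition $[0,0.6]\times[0,1]$ and $[0.4,1]\times[0,1]$ is fine topologically, but when you translate it to $P_{16}$ you misidentify the boundary. In the parameterization you wrote, the core of the band sits at $y=1/2$, not at $y\in\{0,1\}$; the boundary circle $\partial M$ (the inner concentric cycle) is the image of \emph{both} $y=0$ and $y=1$. Hence the boundary of $[0,0.6]\times[0,1]$ consists of \emph{two} arcs on the inner concentric cycle together with two ``transversal'' paths that cross the core at a single point each---there is \emph{no} arc along the non-contractible level-$16$ cycle. What you actually describe is an ordinary annular sector in $D_{16}$ bounded by one inner arc, one outer arc on $\{|i|+|j|=16\}$, and two radial paths. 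Two such sectors cannot cover the M\"obius band: to cover every interior point (say at level~$10$) the sectors must jointly span the full $2\pi$ angular range in $D_{16}$, forcing at least one sector to have width $\ge\pi$; but then its outer arc contains antipodal pairs and ceases to be an embedded arc in $P_{16}$. Moreover, with an arc along level~$16$ there is nowhere to push outward to build $\Pi_2^+$. Your idea can be repaired by using the correct transversal boundaries (so that $\Pi_2,\Pi_3$ cross the core rather than run along it), but then the construction and the nestedness check become considerably more involved than the paper's $K_4$ argument.
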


\begin{proof}
    By \Cref{thm:GHLS}, $G$ has $G' := P_{16}$ as a surface minor.
    Let $X := \{(i,j) \in V(G') \mid j = \pm 5\} \cup \{(i,j) \in V(G') \mid i = \pm 5,\ -5 \le j \le 5\}$.
    Notice that $S' := G'[X]$ is a cellularly embedded $K_4$-subdivision with branch vertices $v_1 := (5,5)$, $v_2 := (-5,5)$, $v_3 := (-5,-5)$ and $v_4 := (5,-5)$, see \Cref{fig:PDG16s}.
    Since $G'$ is a surface minor of $G$, and $S'$ is a surface minor of $G'$, we conclude that $S'$ is a surface minor of $G$. Since $S'$ has maximum degree $3$, we get that $G$ contains a $K_4$-subdivision $S$ that is mapped to $S'$ via the contraction and deletion operations producing $G'$ from $G$.
    The $K_4$-subdivision $S$ is cellularly embedded in the projective plane and has three faces $f_1$, $f_2$ and $f_3$, respectively corresponding to the faces of $S'$ depicted in cyan, yellow and magenta in the figure. 
    
    For each $i \in [3]$, we define $\Pi_i$ as the closure of the face $f_i$. Notice that each $\Pi_i$ is a closed disk bounded by a contractible cycle. Further, $\Pi_1 \cup \Pi_2 \cup \Pi_3$ is the whole projective plane.
    
    Now consider the bold cyan, yellow and magenta cycles depicted in \Cref{fig:PDG16s}. Those are contractible cycles $C'_1$, $C'_2$ and $C'_3$ contained in $G'$. For each $i \in [3]$, we can lift $C'_i$ to a contractible cycle $C_i$, which is mapped to $C'_i$ by the operations producing $G'$ from $G$. We define $\Pi^+_i$ as the closed disk bounded by $C_i$. Observe that $(\Pi_i,\Pi_i^+)$ is a nested pair of closed disks contoured by $G$.
    
    Hence, we obtain a surface cover $\left\{(\Pi_i,\Pi_i^+)\right\}_{i \in [3]}$.
\end{proof}
        
\begin{figure}[h!]
    \centering
    \begin{tabular}{c@{\hspace{1em}}c}
    \includegraphics[width=.35\textwidth]{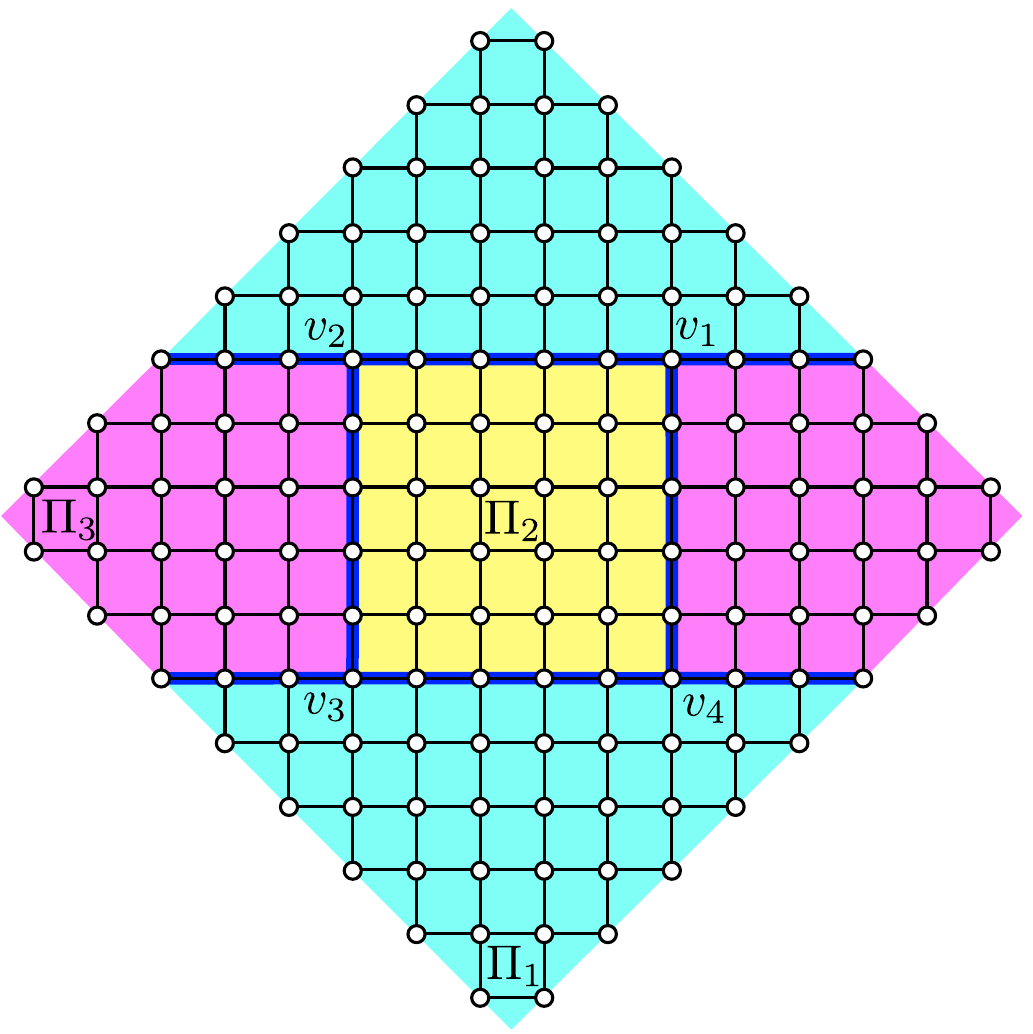}
    & \includegraphics[width=.35\textwidth]{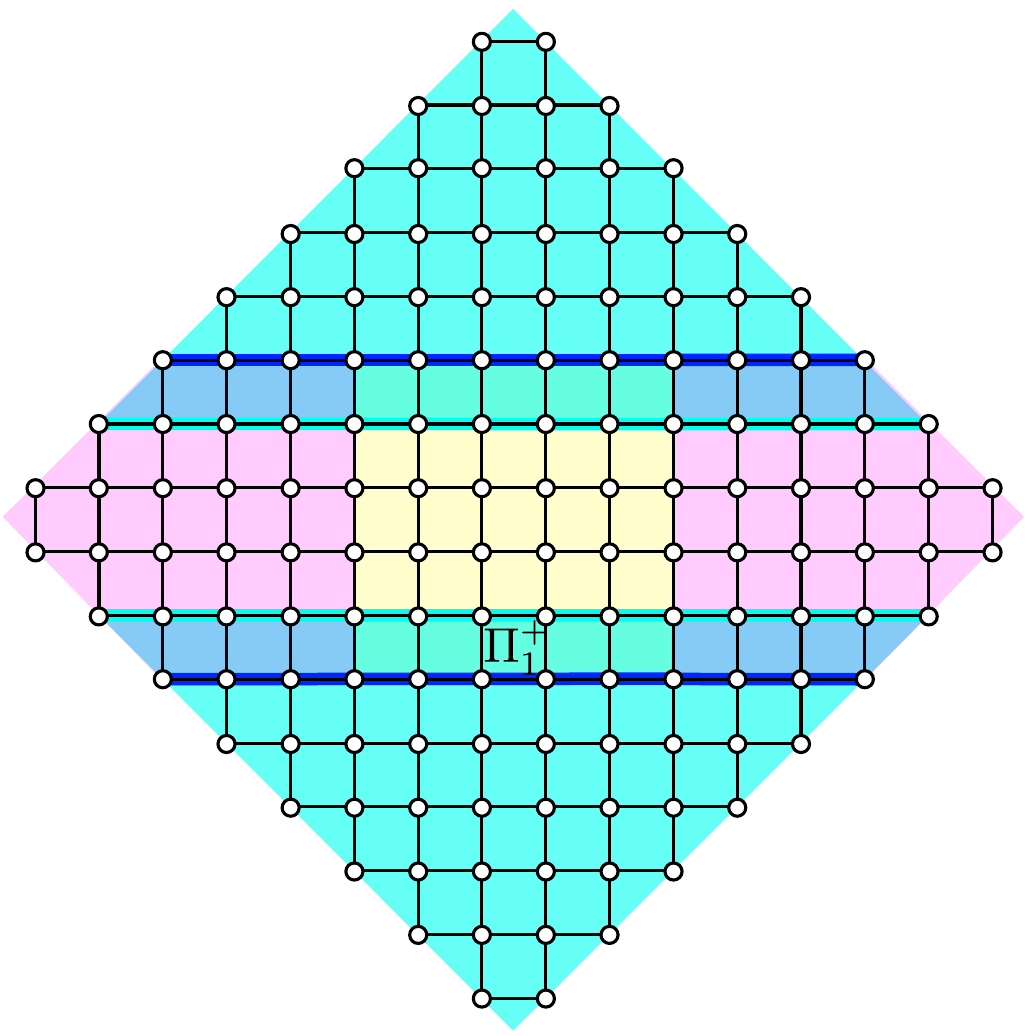}\\[1ex]
    \includegraphics[width=.35\textwidth]{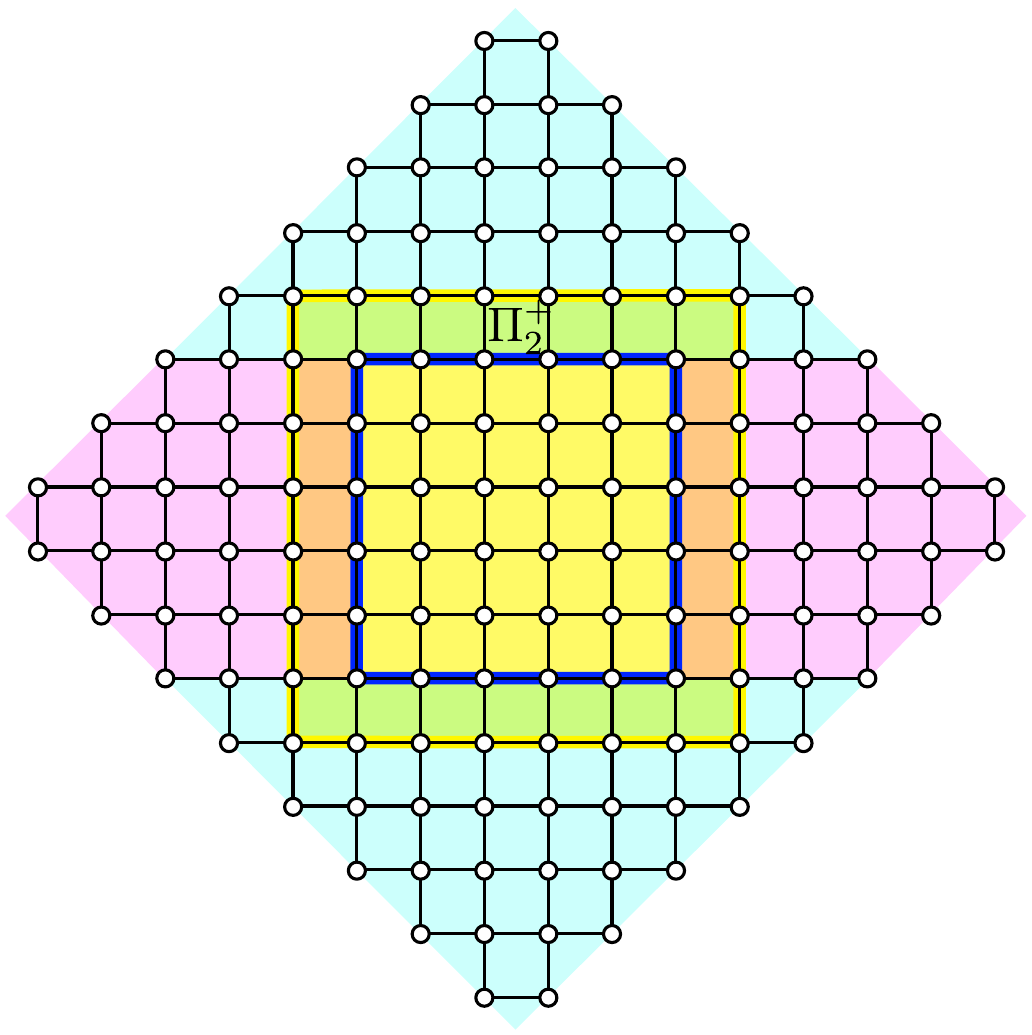}
    & \includegraphics[width=.35\textwidth]{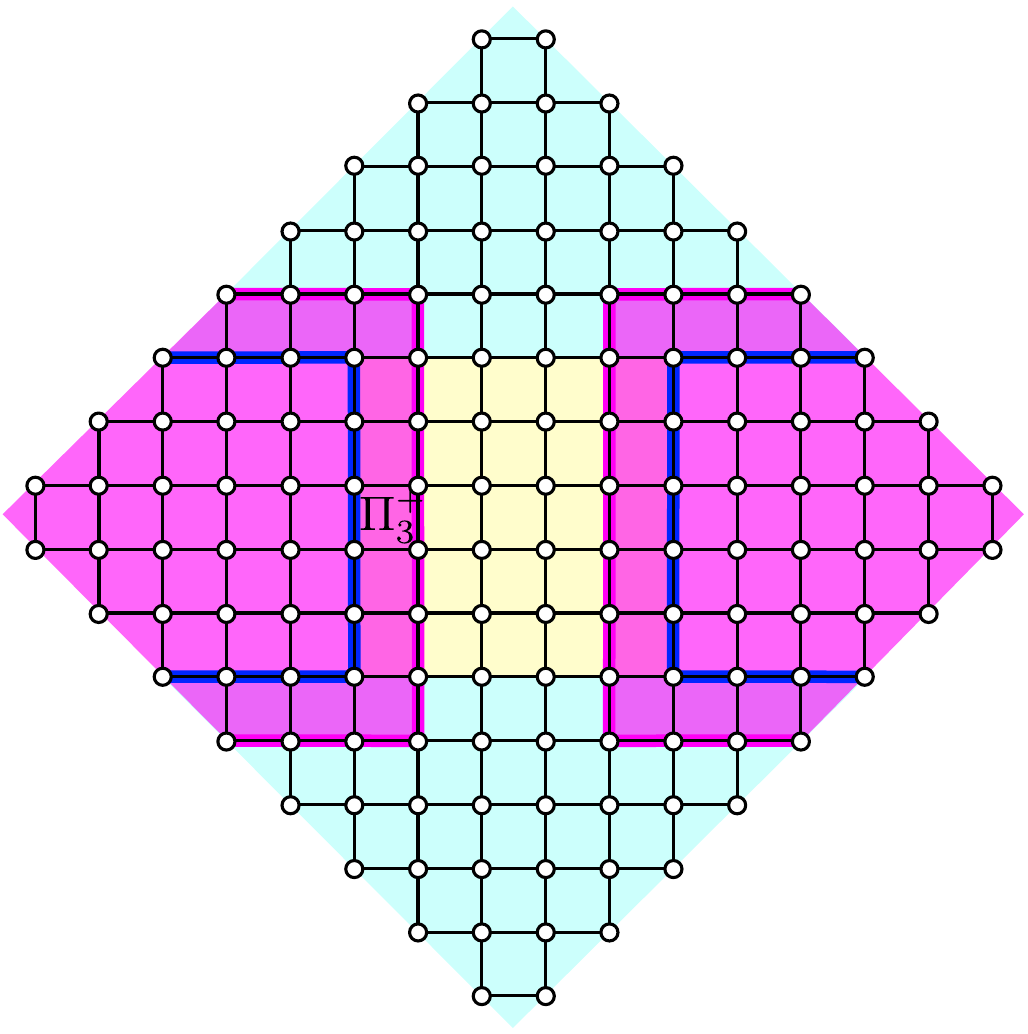}
    \end{tabular}
    \caption{The figure shows four copies of the projective diamond grids of parameter $r = 16$. In each copy, pairs of diametrically opposite points on the boundary are identified. The bold blue subgraph is a cellularly embedded $K_4$-subdivision with branch vertices $v_1$, $v_2$, $v_3$ and $v_4$. The top left copy shows $\Pi_1$, $\Pi_2$ and $\Pi_3$, which correspond to the faces of the $K_4$-subdivision. The top right copy shows $\Pi_1^+$, the bottom left $\Pi_2^+$ and the bottom right $\Pi_3^+$.    }
    \label{fig:PDG16s}
\end{figure}

Before implementing the rest of our strategy, we need to formally define what it means, for each $i \in [3]$, that the minor $H_i$ of $G$ agrees with $G$ in the neighborhood of $\Pi_i$. The definition and properties of $H_i$ are described in \Cref{3c-minor} right after.

\begin{definition} \label{def:agree}
As before, let $G$ be a graph embedded in a surface $\surf$, and let $(\Pi,\Pi^+)$ be a pair of nested surfaces with boundary contained in $\surf$ and contoured by $G$.
Let $H$ be a surface minor of $G \cap \Pi^+$, taken in $\widehat{\Pi^+}$, the surface obtained from $\Pi^+$ by capping all of its cuffs. We say that $H$ \emph{agrees with $G$ in the neighborhood of $\Pi$} if $H \cap \Pi = G \cap \Pi$ and for each face $f$ of $H$, there exists a corresponding face $\varphi(f)$ of $G$ incident to exactly the same vertices of $G \cap \Pi$.
\end{definition}

We point out that we can take $\varphi(f) = f$ in case $f$ is contained in $\Pi$, since we assume that the embeddings of $H$ and $G$ coincide within $\Pi$. The nontrivial case arises when $f$ is not contained in $\Pi$.

\begin{lemma}\label{3c-minor}
    Let $G$ be a $3$-connected graph embedded in a fixed surface $\surf$ with $\fw(G)\ge3$ and let $(\Pi,\Pi^+)$ be a pair of nested surfaces with boundary contained in $\surf$ and contoured by $G$.
    Then there exists a $3$-connected surface minor $H$ of $G\cap\Pi^+$ that agrees with $G$ in the neighborhood of $\Pi$.
\end{lemma}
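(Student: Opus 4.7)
The plan is to construct $H$ by modifying $G \cap \Pi^+$ only inside the ``buffer region'' between $\Pi$ and $\partial \Pi^+$, leaving its intersection with $\Pi$ unchanged. By \Cref{def:nested_pair}, $\Pi^+ \setdiff \intt(\Pi)$ is a disjoint union of spheres with boundary, each having exactly one cuff in common with $\Pi$; capping the remaining cuffs (those of $\Pi^+$) turns every such component into a closed disk $D_j$ bounded by the corresponding cuff cycle $C_j$ of $\Pi$. Thus $\widehat{\Pi^+}$ decomposes as $\Pi$ together with disjoint closed disks $D_1, \ldots, D_m$, each attached along a cycle $C_j$ of $G$.

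Next I would work locally in each disk $D_j$, treating the subgraph $G_j := G \cap D_j$ as a planar graph with $C_j$ on its outer boundary. Writing the faces of $G_j$ inside $D_j$ (which coincide with the faces of $G$ contained in $D_j$) as $F_1, \ldots, F_{\ell_j}$, I record the contact set $S_i := V(F_i) \cap V(C_j)$ for each $i$. The goal is to replace $G_j$ by a planar minor $H_j$ such that $C_j \subseteq H_j$, every face of $H_j$ inside $D_j$ has $C_j$-contact set equal to some $S_i$, and every $2$-vertex cut of $H_j$ is contained in $V(C_j)$. I would build $H_j$ by iteratively contracting interior edges and suppressing degree-$2$ interior vertices whenever doing so does not merge two faces with distinct contact sets, and by deleting an interior edge whenever merging the two adjacent faces yields a face whose contact set equals some $S_i$. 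Using the polyhedral structure of $G$ (from \Cref{faces-cycles}), I would argue that this reduction process stabilizes at a minor with the desired properties.

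Then I would assemble $H$ from $G \cap \Pi$ together with the minors $H_j$, obtaining a surface minor of $G \cap \Pi^+$ embedded in $\widehat{\Pi^+}$ with $H \cap \Pi = G \cap \Pi$. The face-correspondence $\varphi$ from \Cref{def:agree} is defined piecewise: the identity on faces of $H$ contained in $\Pi$, and via the local correspondence between faces of $H_j$ and faces of $G_j$ otherwise. To verify $3$-connectivity, I would consider a hypothetical $2$-vertex cut $\{u,v\}$ of $H$ and split into cases according to where $u, v$ lie relative to the disks $D_j$. If both lie in the interior of some $D_j$, the internal $3$-connectivity property of $H_j$ yields a contradiction; if they lie on $\Pi$ or on some $C_j$, I would lift the cut back to a $2$-cut of $G$ using that $\Pi^+$ is contoured by $G$ and the polyhedral embedding of $G$ guaranteed by $\fw(G) \ge 3$, again contradicting $3$-connectivity of $G$.

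The main obstacle will be the construction and analysis of $H_j$ in the local step. Ensuring that the contact-set structure on $C_j$ is preserved while simultaneously achieving the internal connectivity property requires carefully choosing which interior edges to contract and which to delete; I expect to use the polyhedral embedding of $G$ repeatedly to show that the face structure near $C_j$ is rigid enough to permit such a clean reduction, and to handle the corner cases where $C_j$ is short or where several faces of $G$ share the same contact set with $C_j$.
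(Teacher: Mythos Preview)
Your decomposition of $\widehat{\Pi^+}$ into $\Pi$ together with disks $D_j$ is exactly right, and the overall architecture (modify only inside the buffer, then verify agreement and $3$-connectivity) matches the paper. The essential difference is that the paper's local construction is a one-line operation rather than an iterative reduction: define $H$ by contracting \emph{each connected component} of $(G\cap\Pi^+) - V(G\cap\Pi)$ to a single vertex, and nothing else. Because only contractions are used---no deletions---the faces of $H$ are in canonical bijection with the faces of $G\cap\Pi^+$ via the contraction map $\sigma$, so $\varphi(f):=\sigma^{-1}(f)$ gives the face correspondence required by \Cref{def:agree} for free. This bypasses entirely the delicate ``contact-set preservation'' analysis you flag as the main obstacle.

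Your iterative scheme, by contrast, mixes contractions with deletions, and the deletion rule (``delete an interior edge whenever the merged face has contact set equal to some $S_i$'') is where the gap lies: there is no reason such a deletion is ever available, so the process may stall at a graph that still has internal $2$-cuts but no admissible move; and even when it terminates, you would still owe an argument that \emph{every} face of the result has a matching $S_i$, not just that no face has a contact set outside the list. The paper's pure-contraction construction avoids all of this.

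For $3$-connectivity the paper also takes a cleaner route than your proposed case analysis on hypothetical $2$-cuts: it shows the embedding of $H$ in $\widehat{\Pi^+}$ is \emph{polyhedral} and then invokes \Cref{faces-cycles}. The only nontrivial ingredient is that every ``component vertex'' (a contracted component) has degree at least $3$; this follows by surrounding such a vertex with a $G$-normal closed curve meeting $G$ in its neighbours, and using $\fw(G)\ge 3$ together with $3$-connectivity of $G$ to rule out degree $\le 2$. Once that is in hand, checking that faces are disks bounded by cycles and that pairs of facial cycles meet in at most an edge reduces (via $\sigma^{-1}$) to the polyhedrality of $G$ itself.
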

\begin{proof}
    We define $H$ as the surface minor of $G\cap\Pi^+$ obtained by contracting each component of $(G\cap\Pi^+) - V(G\cap\Pi) $ into a single vertex.
    We obtain an embedding of $H$ in $\widehat{\Pi^+}$ by modifying the canonical embedding of $G\cap\Pi^+$ in this surface. Specifically, we continuously contract the components of $(G\cap\Pi^+) - V(G\cap\Pi)$ and their enclosed faces into single points.
    This contraction step can be modeled by a continuous map $\sigma: \widehat{\Pi^+}\to\widehat{\Pi^+}$, in such a way that the embedding of $H$ is the image under $\sigma$ of the embedding of $G\cap\Pi^+$.
     We choose $\sigma$ to be injective outside of the components of $(G\cap\Pi^+) - V(G\cap\Pi)$, and the identity within $\Pi$. This can be done without loss of generality.
 
    \begin{figure}[ht]
    \centering
    \includegraphics[width=.9\textwidth]{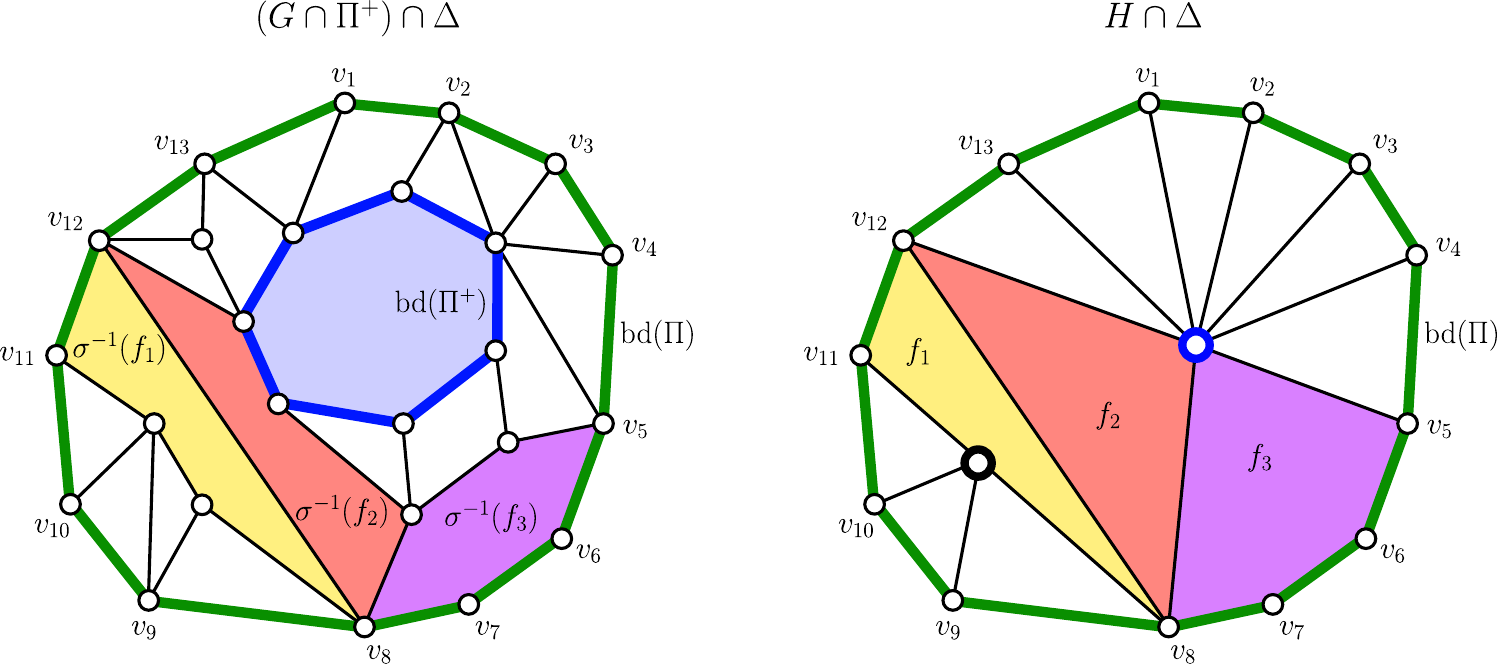}
    \caption{A graph $G\cap\Pi^+$ restricted to some disk $\Delta = \Delta_i$ which is a component of $\widehat{\Pi^+} \setdiff \intt(\Pi)$ (left), and the corresponding minor $H$ restricted to the same disk (right). The two larger vertices of $H$ correspond to component vertices. The green and the blue cycle are cuffs (that is, boundary components) of $\Pi$ and $\Pi^+$ respectively. The figure shows, for each $i \in [3]$, a face $f_i$ of $H$ contained in $\Delta$ along with and the corresponding face $\varphi(f_i) = \sigma^{-1}(f_i)$ of $G$.}
    \label{fig:nests_Klein_bottle}
    \end{figure}

    We first argue that $H$ agrees with $G$ in the neighborhood of $\Pi$. Since $\sigma$ is the identity within $\Pi$, we see immediately that $H \cap \Pi = G \cap \Pi$. Now consider some face $f$ of $H$. We let $\varphi(f) := \sigma^{-1}(f)$ denote the corresponding face of $G$. We proceed to check that this is indeed a face of $G$, which is moreover incident to the same vertices of $G \cap \Pi$ as $f$.
    
    If $f$ is contained in $\Pi$, then we have $\sigma^{-1}(f) = f$ and there is nothing to check. 
    
    Therefore, we may assume that $f$ is contained in $\widehat{\Pi^+} \setdiff \Pi$.    
    We observe that $\sigma^{-1}(f)$ is a face of $G$ since we obtained $H$ from $G\cap\Pi^+$ only by contracting edges. No edge or vertex was deleted.   
    Because $\sigma$ is the identity within $\Pi$, hence in particular on the boundary of $\Pi$, we see that $\sigma^{-1}(f)$ is incident to exactly the same vertices of $G \cap \Pi$ as $f$.     
        
    It remains to show that $H$ is $3$-connected.
    We begin by inspecting the \emph{component vertices} of $H$, that is, the vertices that corresponds to a contracted component of $(G\cap\Pi^+) - V(G\cap\Pi)$. We show that every component vertex $v\in V(H)$ has degree at least $3$. 
    Assume otherwise.
    
    Notice that, since $\Pi$ and $\Pi^+$ are nested, there is a collection $\left\{\Delta_i\right\}_{i\in[c]}$ of pairwise disjoint closed disks contained in $\widehat{\Pi^+}$ such that $\widehat{\Pi^+} \setdiff \intt(\Pi) = \bigcup_{i \in [c]} \Delta_i$, where $c\in\Z_{\ge1}$ is the number of cuffs of $\Pi$.
    
    Let $i\in[c]$ be such that $v$ is embedded in the interior of $\Delta_i$.
    We can find a simple closed $H$-normal curve $o$ in $\Delta_i$ intersecting $H$ only in $N(v)$ and such that $v$ is contained in the open disk bounded by $o$. In particular, $o$ avoids every component vertex of $H$, 
    Hence, $\sigma^{-1}(o)$ is a simple closed $G$-normal curve in $\surf$ contained in $\intt(\Pi^+)$, intersecting $G$ in at most two vertices, which are both on the boundary cycle of $\Delta_i$.
    If $\sigma^{-1}(o)$ is non-contractible, then it is a noose, contradicting $\fw(G)\ge3$.
    Otherwise, $\sigma^{-1}(o)$ bounds an open disk containing $v$, contradicting the $3$-connectivity of $G$.

    Next, we show that $H$ is $3$-connected. We heavily rely on \Cref{faces-cycles}, which we use repeatedly below (without explicit mention). Recall that an embedding is called polyhedral if each face is an open disk bounded by a cycle and every two facial cycles intersect in a vertex, an edge, or not at all. By hypothesis, $G$ is polyhedrally embedded in $\surf$. We show that this implies that $H$ is polyhedrally embedded in $\widehat{\Pi^+}$. This in particular establishes the $3$-connectivity of $H$.
    
    Every face of $G$ is an open disk bounded by a cycle. Since $H \cap \Pi = G \cap \Pi$, this implies that every face of $H$ contained in $\Pi$ is an open disk, and is bounded by a cycle. Now consider a face $f$ of $H$ that is not contained in $\Pi$, thus $f$ is contained in $\Delta_i$ for some $i \in [c]$.     Consider the corresponding face $\sigma^{-1}(f)$ in $G \cap \Pi^+$. We know that $\sigma^{-1}(f)$ is an open disk bounded by a cycle, which implies that $f$ also. Indeed, the boundary of $f$ can be obtained from that of its preimage $\sigma^{-1}(f)$ by contracting subpaths, which are all pairwise at distance at least $2$.

    Finally, consider two faces of $H$, say $f_1$ and $f_2$. Since $H$ agrees with $G$ in the neighborhood of $\Pi$, we may assume that $f_1$ and $f_2$ are incident to some common component vertex, say $v$. Indeed, otherwise the boundary cycles of $f_1$ and $f_2$ intersect exactly as the boundary cycles of $\sigma^{-1}(f_1)$ and $\sigma^{-1}(f_2)$, that is, in a vertex, an edge or not at all.
    
    Hence, $f_1$ and $f_2$ are contained in the same $\Delta_i$, for some $i \in [c]$. Let $C \subseteq H$ denote the boundary of $\Delta_i$. Notice that $N_H(v) \subseteq V(C)$. Let $W$ denote the subgraph of $H$ induced by $V(C) \cup \{v\}$. This is a subdivided wheel. Since $f_1$ and $f_2$ are both incident to $v$ and distinct, they are contained in distinct faces of $W$. Since $v$ has at least three neighbors, we conclude that the boundaries of $f_1$ and $f_2$ intersect as they should.      
    \end{proof}

\begin{lemma}\label{lifting-covers}
    Let $(G,R)$ be a rooted graph embedded in a surface $\surf$.
    Let $\left\{(\Pi_i,\Pi_i^+)\right\}_{i\in[k]}$ be surface cover of $\surf$ with respect to $G$, and let $(H_i)_{i\in[k]}$ be a collection of corresponding minors, such that $H_i$ agrees with $G$ in the neighborhood of $\Pi_i$ for each $i \in [k]$. 
    Let $R_i := R \cap \Pi_i$ denote the roots contained in $\Pi_i$ for each $i \in [k]$.
    If each $(H_i,R_i)$ admits a face cover of size $\phi_i\in\N$, then $(G,R)$ admits a face cover of size at most $\sum_{i\in[k]}\phi_i$.
\end{lemma}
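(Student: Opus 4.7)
The plan is straightforward and amounts to pushing each small face cover of $(H_i, R_i)$ forward to a face cover of $(G, R_i)$ of the same size, and then taking the union over $i \in [k]$.

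First, I would fix $i \in [k]$ and use the agreement of $H_i$ with $G$ in the neighborhood of $\Pi_i$ (\Cref{def:agree}) to produce a map $\varphi_i$ from faces of $H_i$ to faces of $G$, with the property that for every face $f$ of $H_i$ and every vertex $v \in V(G \cap \Pi_i)$, we have $v$ incident to $f$ if and only if $v$ incident to $\varphi_i(f)$. The key observation is that since $H_i \cap \Pi_i = G \cap \Pi_i$, every root $r \in R_i = R \cap \Pi_i$ is a vertex of both $G$ and $H_i$ that lies in $\Pi_i$, so the agreement property applies to it.

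Next, given a face cover $\mathcal{F}_i$ of $(H_i, R_i)$ of size $\phi_i$, I would define $\mathcal{F}_i' := \{\varphi_i(f) : f \in \mathcal{F}_i\}$, a collection of faces of $G$ of size at most $\phi_i$. For each root $r \in R_i$ there exists some $f \in \mathcal{F}_i$ incident to $r$ (since $\mathcal{F}_i$ covers $R_i$ in $H_i$), and then by the agreement property $\varphi_i(f) \in \mathcal{F}_i'$ is also incident to $r$ in $G$. Hence $\mathcal{F}_i'$ is a face cover of $(G, R_i)$.

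Finally, since $\bigcup_{i \in [k]} \Pi_i = \surf$ by the definition of a surface cover (\Cref{def:surface_cover}), every root $r \in R$ lies in some $\Pi_i$, hence $R = \bigcup_{i \in [k]} R_i$. Therefore $\mathcal{F} := \bigcup_{i \in [k]} \mathcal{F}_i'$ is a face cover of $(G, R)$ of size at most $\sum_{i \in [k]} \phi_i$, as desired. There is no real obstacle here; the entire content of the lemma is packaged into \Cref{def:agree} and \Cref{def:surface_cover}, and the proof just stitches these definitions together. The only mild subtlety is keeping track of the fact that a root incident to $\Pi_i$ need not be covered by the faces obtained for index $i$ unless $r \in \Pi_i$ itself; this is why we partition only by $R_i = R \cap \Pi_i$ rather than by roots near $\Pi_i$.
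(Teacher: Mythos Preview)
Your proposal is correct and follows essentially the same approach as the paper: use the map $\varphi_i$ guaranteed by \Cref{def:agree} to push each face cover of $(H_i,R_i)$ forward to a face cover of $(G,R_i)$, then take the union over $i$ using that $\bigcup_i \Pi_i = \surf$ forces $R = \bigcup_i R_i$. The only difference is organizational---the paper argues root by root while you build $\mathcal{F}_i'$ face-cover by face-cover---but the content is identical.
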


\begin{proof}
    For each root $r\in R$, there is some $i = i(r) \in[k]$, such that $r\in\Pi_i$.
    Therefore, $r\in R_i$ and it is covered by some face $f$ in the face cover of $(H_i,R_i)$.
    Since $r\in\Pi_i$ and $H_i$ agrees with $G$ in the neighborhood of $\Pi_i$, we can find a face $\varphi_i(f)$ of $G$ covering the same set of roots within $\Pi_i$. In particular, $\varphi_i(f)$ covers $r$.
    We add $\varphi_i(f)$ to the face cover and repeat this process for every root in $R$.
    We obtain a face cover of $(G,R)$, which is the union for $i \in [k]$ of the image under $\varphi_i$ of the face cover of $(H_i,R_i)$. The result follows.
\end{proof}

\begin{proposition}\label{proj-planar-case}
    Let $(G,R)$ be a $3$-connected rooted graph without a rooted $K_{2,t}$ minor embedded in the projective plane with $\fw(G) \ge 16$.
    Then $(G,R)$ admits a face cover of size at most $3\cdot f_{\ref{thm:planar}}(t)\in O(t^4)$.
\end{proposition}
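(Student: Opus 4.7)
The plan is to assemble the lemmas already proved in this section. Since $\fw(G) \ge 16 \ge 3$, we can apply \Cref{proj-planar-cover} to obtain a surface cover $\{(\Pi_i,\Pi_i^+)\}_{i \in [3]}$ of the projective plane with respect to $G$. Crucially, the proof of \Cref{proj-planar-cover} shows that each $\Pi_i$ and each $\Pi_i^+$ is a closed disk bounded by a contractible cycle of $G$; in particular, $\widehat{\Pi_i^+}$ is homeomorphic to a sphere.

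Next, for each $i \in [3]$, we apply \Cref{3c-minor} (using $3$-connectivity and $\fw(G) \ge 3$) to obtain a $3$-connected surface minor $H_i$ of $G \cap \Pi_i^+$ embedded in $\widehat{\Pi_i^+}$ that agrees with $G$ in the neighborhood of $\Pi_i$. Since $\widehat{\Pi_i^+}$ is a sphere, each $H_i$ is a $3$-connected planar graph. We set $R_i := R \cap \Pi_i$ and view $(H_i, R_i)$ as a rooted plane graph.

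We next verify that $(H_i, R_i)$ has no rooted $K_{2,t}$ minor. Since $H_i$ is obtained from $G \cap \Pi_i^+ \subseteq G$ by contractions (and, possibly, deletions of edges or isolated vertices), any rooted $K_{2,t}$-model in $(H_i, R_i)$ lifts to a rooted $K_{2,t}$-model in $(G, R_i)$, and since $R_i \subseteq R$ this would also be a rooted $K_{2,t}$-model in $(G, R)$, contradicting the hypothesis. Therefore \Cref{thm:planar} applies to each $(H_i, R_i)$, producing a face cover of size at most $f_{\ref{thm:planar}}(t)$.

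Finally, because $\bigcup_{i \in [3]} \Pi_i$ is the whole projective plane, every root of $R$ lies in some $R_i$, and \Cref{lifting-covers} lifts these three planar face covers to a single face cover of $(G, R)$ of total size at most $3 \cdot f_{\ref{thm:planar}}(t) \in O(t^4)$. The only step requiring nontrivial verification is the no-rooted-$K_{2,t}$-minor condition on the minors $H_i$, and this follows immediately from $R_i \subseteq R$ together with the fact that $H_i$ is a minor of $G$; the rest is bookkeeping supplied by the lemmas above.
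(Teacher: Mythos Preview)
Your proof is correct and follows the same approach as the paper: apply \Cref{proj-planar-cover}, then \Cref{3c-minor}, then \Cref{thm:planar}, then \Cref{lifting-covers}. You supply two details that the paper leaves implicit, namely that $\widehat{\Pi_i^+}$ is a sphere (so $H_i$ is planar) and that $(H_i,R_i)$ inherits the no-rooted-$K_{2,t}$-minor property because $H_i$ is a minor of $G$ with $R_i \subseteq R$; both are correct and worth stating.
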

\begin{proof}
    By \Cref{proj-planar-cover}, we can find a surface cover $\left\{(\Pi_i,\Pi_i^+)\right\}_{i \in [3]}$ of the projective plane with respect to $G$
    and \Cref{3c-minor} allows us to find planar minors $\left\{H_i\right\}_{i\in[3]}$, where each $H_i$ agrees with $G$ in the neighborhood of $\Pi_i$.
    Therefore, by \Cref{thm:planar} we obtain a face cover of size at most $f_{\ref{thm:planar}}(t)$ for each of the three rooted minors.
    By \Cref{lifting-covers}, we obtain a face cover of size at most $3\cdot f_{\ref{thm:planar}}(t)$ for $(G,R)$.
\end{proof}

\subsection{The general case} \label{sec:general}

Let \(G\) denote a graph embedded in a surface \(\surf\). A \emph{planarizing set of cycles} is a set of disjoint cycles \(\CC\) in \(G\) such that cutting along \(\CC\) yields a graph \(G'\) embedded in a disjoint union of \emph{spheres with boundary} \(\surf'\).
Notice that $\surf'$ is contoured by $G'$.
Further, if \(\CC\) is an inclusion-wise minimal planarizing set of cycles, then \(\surf'\) has only one component, and thus is a \emph{single} sphere with boundary. 
(Indeed, otherwise there exists a two-sided cycle \(C \in \CC\) such that undoing the operation of cutting along \(C\) joins two distinct connected components of \(\surf'\), hence \(\CC \setdiff \{C\}\) is also planarizing, contradicting the minimality of \(\CC\).)
If \(\CC'\) denotes the set of cycles in \(G'\) corresponding to the cycles from \(\CC\) in \(G\),
then we call the tuple \((\CC, G', \surf', \CC')\) a \emph{planarization} of \(G\) and \(\surf\). Notice that, whenever \(\CC\) is inclusion-wise minimal \(\CC'\) contains precisely \(g\) cycles, where \(g\) is the Euler genus of \(\surf\). \Cref{fig:cutting_along_C} above provides an example with \(\CC = \{C\}\) and \(\CC' = \{C',C''\}\).

For a graph \(G'\) embedded in a sphere with boundary \(\surf'\) contoured by \(G'\), a cuff \(C\) and an integer \(d \ge 1\), a \emph{depth-\(d\) nest} around \(C\) is a sequence of disjoint cycles \(\DD = \DD(C) = (D_0, \ldots, D_d)\) in \(G' \) and closed disks \(\Delta_0 \subsetneq \Delta_1 \subsetneq \cdots \subsetneq \Delta_d\) in \(\widehat{\surf'}\), the sphere obtained from \(\surf'\) by capping all cuffs, such that \(D_0 = C\), and for each \(i \in \{0,\ldots,d\}\), cycle \(D_i\) is the boundary of \(\Delta_i\). 

A cycle \(C''\) in \(G'\) is said to be \emph{outside} the nest \(\DD = \DD(C')\) if \(C''\) is disjoint from the disk \(\Delta_d\). Two depth-\(d\) nests \(\DD' = (D_0', \ldots, D_d')\) and \(\DD'' = (D_0'', \ldots, D_d'')\) around distinct cuffs are \emph{disjoint} if \(V(D_i') \cap V(D_j'') = \emptyset\) for all \(i,j\in\{0,\ldots, d\}\). 

\begin{figure}[ht]
\centering
\includegraphics[width=0.6\textwidth]{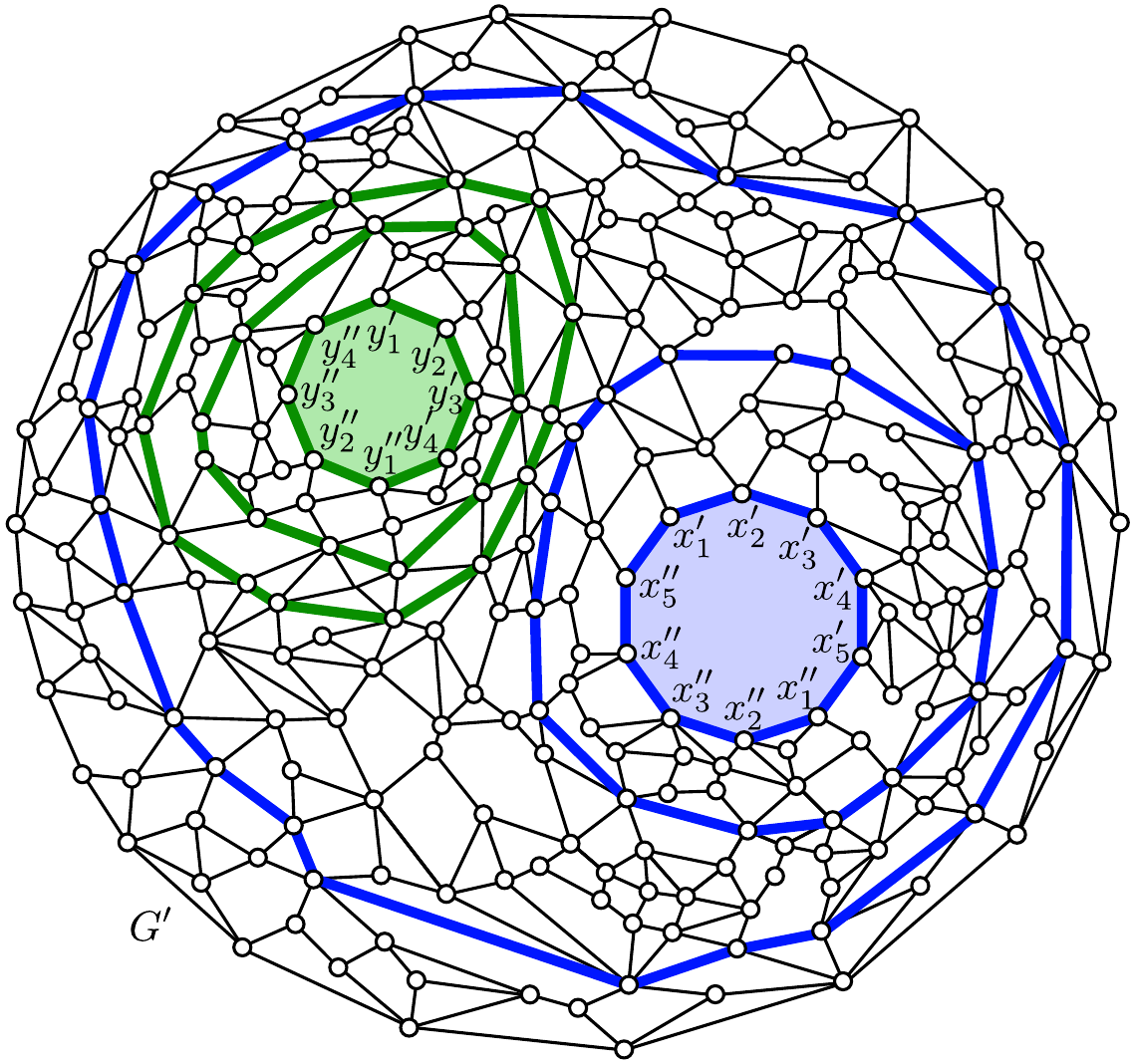}
\caption{A graph \(G'\) embedded in a sphere with boundary \(\surf'\), obtained by planarizing a graph \(G\) embedded on the Klein bottle \(\surf\), along two one-sided cycles. The two cuffs of \(\surf'\) are the boundaries of the shaded disks.}
\label{fig:nests_Klein_bottle}
\end{figure}

\begin{theorem}[Yu~{\cite[Thm.~4.3]{Yu97}}]\label{Yu}
 Let \(g, d \ge 1\) be integers. 
 Let \(\surf\) be a surface with Euler genus \(g\), and let  \(G\) be a graph embedded in \(\surf\) with face-width at least \(w_{\ref{Yu}}(g, d) := 8(2d+1)(2^{g}-1)\). 
 Then there exists a planarization \((\CC, G', \surf', \CC')\) of \(G\) and a depth-\(d\) nest \(\DD(C')\) in \(G'\) (which is embedded in \(\surf'\)) around each \(C' \in \CC'\) such that for any distinct \(C', C'' \in \CC'\), the nests \(\DD(C')\) and \(\DD(C'')\) are disjoint and \(C'\) is outside \(\DD(C'')\) or \(C''\) is outside \(\DD(C')\).
\end{theorem}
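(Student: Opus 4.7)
My plan is to proceed by induction on the Euler genus $g$ of $\surf$. The base case $g = 0$ is trivial: take $\CC = \emptyset$, so $\surf$ is already a sphere and no nests need to be constructed. For $g \ge 1$, the strategy is to peel off one non-contractible cycle $C$ together with a depth-$(d+1)$ collar of concentric cycles around it, cut along $C$ to descend to a surface of strictly smaller Euler genus, and then recurse.

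To build the collar I would take $C_0 := C$ to be a shortest non-contractible cycle in $G$ and then, layer by layer in the BFS metric of $G$ starting from $C_0$, extract a sequence of pairwise disjoint cycles $C_1, \ldots, C_{d+1}$, each homotopic to $C_0$ in $\surf$, with the property that consecutive cycles $C_{i-1}$ and $C_i$ cobound an annulus (if $C$ is two-sided) or a M\"obius band (if $C$ is one-sided) free of the remaining $C_j$'s. The face-width hypothesis $\fw(G) \ge w_{\ref{Yu}}(g,d) = 8(2d+1)(2^g-1)$ forbids short non-contractible nooses, and this is precisely what prevents the BFS layers from ``short-circuiting'' through a distant handle or crosscap, forcing each layer to contain a cycle homotopic to $C_0$.

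Next, I would cut along $C$ to produce $G_1$ embedded in a surface with boundary $\surf_1$. Cutting along a non-contractible cycle strictly decreases Euler genus: by one if $C$ is one-sided, and by two (possibly splitting the surface into two components) if $C$ is two-sided, so each component of $\widehat{\surf_1}$ has Euler genus at most $g-1$. Under the cut, the cycles $C_1, \ldots, C_d$ transport into a depth-$d$ nest $\DD(C')$ around each new cuff $C'$ of $\surf_1$ created by $C$, while $C_{d+1}$ acts as a buffer enclosing $\DD(C')$. I would then apply the inductive hypothesis to each component of $\widehat{\surf_1}$, operating on the part of $G_1$ that lies outside the buffer $C_{d+1}$, to produce additional planarizing cycles and their depth-$d$ nests. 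The union of $\{C\}$ with these new planarizing cycles, and the union of $\DD(C')$ (together with $\DD(C'')$ when $C$ is two-sided) with the inductively produced nests, yields the required planarization; disjointness of nests and the outside relation both hold automatically because every inductively produced cycle lies beyond $C_{d+1}$, hence outside $\DD(C')$.

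The main obstacle will be tracking the face-width through the recursion so that it matches the recurrence $w_{\ref{Yu}}(g,d) = 2\, w_{\ref{Yu}}(g-1,d) + 8(2d+1)$ that underlies the closed form $8(2d+1)(2^g-1)$ (with $w_{\ref{Yu}}(0,d) = 0$). The factor of two reflects a worst case in which a non-contractible noose in $G_1$ crosses the cut $C$ twice and hence roughly doubles in length when reassembled via the canonical projection $\pi:\surf_1 \to \surf$ into a noose of $G$, while the additive $8(2d+1)$ absorbs the cost of detouring across the depth-$(d+1)$ collar itself. Concretely, one must verify that on each component of $\widehat{\surf_1}$ the restriction of $G_1$ to the complement of $C_{d+1}$ has face-width at least $w_{\ref{Yu}}(g-1,d)$: a hypothetical shorter noose there would pull back through $\pi$ either to a short noose of $G$, contradicting the face-width of $G$, or to a curve that can be rerouted across the fine depth-$(d+1)$ collar without increasing its vertex-intersection count by more than $8(2d+1)$. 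Once this estimate is verified, the induction closes.
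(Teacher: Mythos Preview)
The paper does not supply a proof of this theorem: it is quoted from Yu~\cite[Thm.~4.3]{Yu97} and used as a black box in the construction of the surface cover for \Cref{thm:bounded_genus}. There is therefore no proof in the paper to compare your proposal against.

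That said, your outline is broadly in the spirit of how such planarization results are established, and you have correctly reverse-engineered the recurrence $w(g,d) = 2\,w(g-1,d) + 8(2d+1)$ underlying the closed form $8(2d+1)(2^g-1)$. The two places where the sketch is still genuinely incomplete are exactly the ones you flag. First, the ``BFS-collar'' step: asserting that each BFS layer around a shortest non-contractible cycle $C_0$ contains a cycle homotopic to $C_0$ is the heart of the matter, and ``prevents short-circuiting'' is not yet an argument---one typically needs a careful use of the three-path condition (or an equivalent tool) together with the minimality of $C_0$ to show that the collar does not collapse or wander off through another handle. Second, the face-width bookkeeping after cutting: a noose $o$ in a component of $\widehat{\surf_1}$ need not lift via $\pi$ to a single non-contractible closed curve in $\surf$; it may become contractible, or may need to be concatenated with segments of $C$ and pieces of the collar before it yields a noose in $\surf$, and it is in making this rerouting precise that the factor $2$ and the additive $8(2d+1)$ must actually be earned. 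Until those two steps are written out, the proposal is a plausible plan rather than a proof.
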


See \Cref{fig:nests_Klein_bottle} for an illustration of Yu's planarization theorem.\medskip

Below, we construct from the planarization and the nests produced by \Cref{Yu} a collection of at most $2g$ surfaces with boundary covering $\surf$. We make sure that each surface with boundary in the cover is either a sphere with boundary or a projective plane with boundary. Hence, we can apply \Cref{thm:planar} or \Cref{proj-planar-case} to each one.

Let \(g \ge 2\) be an integer.
Let \(r := 16\), let \(d := 3r+1\), and let \(w(g) := \max\{w_{\ref{Yu}}(g, d), r+1\}\).
Let \(\surf\) be a surface with Euler genus \(g\), and let \(G\) be a graph embedded in \(\surf\) with face-width at least \(w(g)\).
We apply~\Cref{Yu}. Denote by \(\CC' = \{C'_1,\ldots,C'_g\}\) the set of cuffs of \(\surf'\), regarded as cycles of \(G'\). For each \(i \in [g]\), let \(\DD(C'_i) = (D_{i,0},\ldots,D_{i,d})\) denote the nest around \(C'_i\), and by \(\Delta_{i,0} \subsetneq \cdots \subsetneq \Delta_{i,d}\) the corresponding (closed) disks. Recall that $\widehat{\surf'} = \surf' \cup \Delta_{1,0} \cup \cdots \cup \Delta_{g,0}$ is the sphere obtained from \(\surf'\) by capping all its cuffs.

\paragraph{Construction of an auxiliary tree $T$.}
Consider the subgraph $H$ of $G'$ 
obtained by taking the union of all the cycles $D_{i,j}$ where $i \in [g]$ and $j \in \{0\} \cup [d]$. 
Recall that these cycles are pairwise disjoint. 
We obtain a graph embedded on the sphere $\widehat{\surf'}$, whose dual graph $H^*$ is a tree with parallel edges. 
Let $T$ denote the tree obtained from $H^*$ by keeping one representative edge in each class of parallel edges. 
We make $T$ into a rooted tree by marking as the root the face which is the complement of $\Delta_{1,d} \cup \cdots \cup \Delta_{g,d}$ in $\widehat{\surf'}$. 
We denote the root of $T$ by $r$ and the collection of non-root leaves of $T$ by $L$. 

Note that the tree $T$ has exactly $g(d+1)+1$ vertices and $g$ non-root leaves.
By construction of $T$ through the dual, each vertex $v\in V(T)$ bijectively corresponds to a face $f(v)$ of $H$ embedded in $\widehat{\surf'}$. For each $i\in[g]$, we denote by $\ell_i \in L$, the unique leaf with $f(\ell_i)=\intt(\Delta_{i,0})$.
Note that these faces are contained in $\widehat{\surf'} - \surf'$. Actually, $\widehat{\surf'} \setdiff \surf' = \bigcup_{i \in [g]} f(\ell_i)$.
Further, each edge $e\in E(T)$ corresponds to a cycle of $G'$ in the following way.
Let $e\in E(T)$ be the representative edge for the edges which are dual to the edges of the cycle $D_{i,j}$ for some $i\in [g]$ and $j\in\{0\}\cup[d]$.
Then we label the edge $e$ with the cycle $D(e)=D_{i,j}$. 

Every subset of vertices $U \subseteq V(T)$ has a natural mapping to a subspace of $\surf'$. We define $\Pi'(U)$ to be the union of all the faces $f(u)$ for $u \in U \setdiff L$ and all the cycles $D(e)$ for an edge $e \in E(T)$ incident to some vertex in $U$.
For $v \in V(T - L)$, subspace $\Pi'(\{v\})$ is the closure of a face of $H$, hence a sphere with boundary. Our next result generalizes this to connected subsets of $V(T - L)$. Moreover, the number of cuffs of $\Pi'(\{v\})$ is simply the degree of $v$ in $T$.
 
\begin{lemma} \label{lem:Pi'}
If $U \subseteq V(T - L)$ is non-empty and connected, then $\Pi'(U)$ is a sphere with boundary contained in $\surf'$. Moreover, $\Pi'(U)$ is contoured by $H = \bigcup_{i,j} D_{i,j}$ and the cuffs of $\Pi'(U)$ are the cycles $D(e)$ for $e \in \delta_T(U \setdiff L)$.
\end{lemma}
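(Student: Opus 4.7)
The plan is to prove the lemma by induction on $|U|$, using \Cref{prop:gluing_surfaces_with_bd} as the main tool in the inductive step to glue smaller pieces along a shared cuff.

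For the base case $|U|=1$, write $U = \{v\}$. Since $v \in V(T-L)$, vertex $v$ is either the root $r$ or an intermediate degree-$2$ vertex of $T$. By the construction of $T$ as the essentially-dual tree of the cycle-union $H$ in the sphere $\widehat{\surf'}$, the face $f(v)$ of $H$ is either the open annulus between two consecutive cycles of some nest (intermediate case, degree $2$) or the open surface $\widehat{\surf'} \setdiff \bigcup_{i \in [g]} \Delta_{i,d}$ (root case, degree $g$). In both cases, $\Pi'(\{v\})$ is the topological closure of $f(v)$ in $\widehat{\surf'}$, hence a sphere with boundary whose cuffs are exactly $\{D(e) : e \in \delta_T(\{v\})\}$. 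Moreover, since $v \notin L$, the face $f(v)$ is disjoint from $\bigcup_{i} \intt(\Delta_{i,0}) = \widehat{\surf'} \setdiff \surf'$, so $\Pi'(\{v\}) \subseteq \surf'$.

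For the inductive step, assume $|U| \ge 2$ and pick a vertex $v \in U$ that is a leaf of the subtree $T[U]$; let $e$ be the unique edge of $T[U]$ incident to $v$, joining $v$ to its neighbor in $U' := U \setdiff \{v\}$. Then $U'$ is non-empty, connected, and contained in $V(T-L)$, so by the induction hypothesis $\Pi'(U')$ is a sphere with boundary contoured by $H$ whose cuffs are $\{D(e') : e' \in \delta_T(U')\}$; in particular $D(e)$ is one of these cuffs. From the definition, $\Pi'(U) = \Pi'(U') \cup \Pi'(\{v\})$, and I claim $\Pi'(U') \cap \Pi'(\{v\}) = D(e)$. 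Indeed, the open faces $f(u)$ for $u \in U'$ are pairwise disjoint and disjoint from $f(v)$, distinct cycles $D(e'), D(e'')$ of $H$ are disjoint (since $H$ is a disjoint union of cycles), and a cycle $D(e')$ lies in both pieces iff $e'$ is incident to both $v$ and some vertex of $U'$, which forces $e' = e$ because $T$ is a tree.

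Since $D(e)$ is a cuff of both $\Pi'(U')$ and $\Pi'(\{v\})$, \Cref{prop:gluing_surfaces_with_bd} applies: gluing two spheres with boundary (orientable, Euler genus $0$) along matching cuffs yields an orientable surface with boundary of Euler genus $0+0 = 0$, that is, another sphere with boundary. The resulting cuffs are exactly those of the two glued pieces with $D(e)$ removed, i.e., $\{D(e') : e' \in (\delta_T(U') \cup \delta_T(\{v\})) \setdiff \{e\}\}$. A direct check, using that $e$ is the unique edge of $T$ between $v$ and $U'$, gives $(\delta_T(U') \cup \delta_T(\{v\})) \setdiff \{e\} = \delta_T(U)$, which matches the claimed description of the cuffs (and since $U \cap L = \emptyset$, this equals $\delta_T(U \setdiff L)$). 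Containment $\Pi'(U) \subseteq \surf'$ and contouredness by $H$ are inherited from the two pieces. The only subtle point is pinning down the intersection $\Pi'(U') \cap \Pi'(\{v\}) = D(e)$, but the disjointness of the cycles of $H$ together with the tree structure of $T$ makes this routine.
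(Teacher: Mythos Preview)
Your proof is correct and follows essentially the same approach as the paper's: induction on $|U|$, peeling off one vertex at a time and gluing via \Cref{prop:gluing_surfaces_with_bd}; you simply supply the details the paper omits. One small caveat: your base-case description assumes every non-root, non-leaf vertex of $T$ has degree~$2$, which Yu's theorem does not literally guarantee (one nest's outermost disk could sit inside an annulus of another), but this does not affect the argument, since the closure of any face of a disjoint union of cycles in a sphere is a sphere with boundary regardless of how many cycles bound it.
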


\begin{proof}
Let $u_1$, \ldots, $u_p$ be an enumeration of the vertices of $U$ such that, for all $i \in [p]$, $\{u_1,\ldots,u_i\}$ is connected and $u_i$ has (at most) one neighbor in $\{u_1,\ldots,u_{i-1}\}$. We use \Cref{prop:gluing_surfaces_with_bd} to show that $\Pi(\{u_1,\ldots,u_i\})$ is a sphere with boundary for all $i \in [p]$, by induction on $i$. The last part of the statement follows readily from the definition of $\Pi'(U)$.
\end{proof}

We denote $\Pi(U):=\pi(\Pi'(U))$ the corresponding subspace of $\surf$, where $\pi$ is the canonical projection map $\pi$ from \Cref{sec:background_surfaces}. In \Cref{set-props} below, we show that, for our very particular choice of $U$, this subspace is either a sphere with boundary or a projective plane with boundary. 

\paragraph{Using $T$ to find a surface cover.}
We partition the vertices of $T$ into connected subsets.
Let $i\in [g]$. 
We define $A_i$ as the subset of $V(T)$ containing all vertices whose distance to $\ell_i$ is at most $d/3$. (In particular, $A_i$ contains $\ell_i$.)
For distinct disks $\Delta_{i,0}$ and $\Delta_{j,0}$, \Cref{Yu} gives us that either $\Delta_{i,0}$ is disjoint from $\Delta_{j,d}$, or $\Delta_{j,0}$ is disjoint from $\Delta_{i,d}$.
This implies that every two distinct vertices of $L \cup \{r\}$ have distance at least $d$. Hence, $A_{i}$ and $A_{j}$ have distance at least $d/3$, whenever $i, j \in [g]$ are distinct. Let $B_1,B_2,\ldots, B_k$ be the vertex sets of the connected components of $T - A_1 - \cdots - A_g$. 

We proceed to argue that $k\le g$.
We define a surjective map $\psi:[g]\to[k]$ in the following way.
For each $i \in [g]$, we walk up in $T$ towards the root starting from $\ell_i$ until we leave $A_i$. By what precedes, we enter $B_j$ for some $j \in [k]$. We let $\psi(i) := j$. Observe $\psi(i)$ is well defined since the root of $T$ is not contained in $A_i$ for any $i \in [g]$. To see that $\psi$ is surjective, pick any $j \in [k]$ and any vertex $u \in B_j$ and walk down from there until we leave $B_j$. Let $v$ be the first vertex we encounter outside of $B_j$. We know that $v$ is contained in $A_i$ for some $i \in [g]$. We continue by following the path from $v$ down to $\ell_i$, which completely lies within $A_i$. We obtain a path going down from $u$ to $\ell_i$, whose first part is contained in $B_j$ and second part in $A_i$. This gives an index $i \in [g]$ such that $\psi(i) = j$. We conclude that $k\le g$.

Let $L_1\subseteq L$ be the collection of non-root leaves of $T$ such that the corresponding cuff is a one-sided cycle in $\surf$.
We let $\mathcal{A}_1 := \{A_i \mid i\in [g],\ \ell_i\in L_1\}$.
The rest of the $A_i$'s can be partitioned into pairs where the corresponding leaves correspond to cuffs resulting from cutting along the same two-sided cycle in $\surf$. Let $M \subseteq {L \setdiff L_1 \choose 2}$ denote the corresponding matching. 
Let $\mathcal{A}_2 := \{A_{i} \cup A_{j} \mid \ell_i\ell_j \in M\}$. 
Let $\mathcal{A} := \mathcal{A}_1\cup \mathcal{A}_2$. 
Let $\mathcal{B} := \{B_j \mid j \in [k]\}$ and $\mathcal{S}:=\mathcal{A}\cup\mathcal{B}$.

For each set $S \in \mathcal{S}$, we define a set $S^+$ where $S^+$ contains $S$ as well as all adjacent vertices. Notice that $S^+ = N_T[S]$.

\paragraph{Properties of the surface cover.}
Since the members of $\mathcal{S}$ partition $V(T)$, the collection $\left\{(\Pi(S),\Pi(S^+))\right\}_{S\in\mathcal{S}}$ is a surface cover of $\surf$.
In the following, we present further properties of $S$ and $S^+$ for $S\in\mathcal{S}$ that allow us to complete the proof of \Cref{thm:bounded_genus}.

\begin{lemma}\label{set-props}
    Fix $S\in\mathcal{S}$.
    Then $(\Pi(S),\Pi(S^+))$ is a pair of nested surfaces with boundary.
    Furthermore, $\Pi(S)$ and $\Pi(S^+)$ are projective planes with boundary if $S\in\mathcal{A}_1$ and spheres with boundary otherwise.
\end{lemma}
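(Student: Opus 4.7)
The plan is to describe $\Pi'(S)$ and $\Pi'(S^+)$ as subsets of $\surf'$ via \Cref{lem:Pi'}, and then to apply the canonical projection $\pi$ and compute the resulting topology via \Cref{prop:gluing_surfaces_with_bd}. The three cases $S \in \mathcal{A}_1$, $S \in \mathcal{A}_2$, and $S \in \mathcal{B}$ are treated in parallel.

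\textbf{Describing $\Pi'(S)$.} For $S = A_i \in \mathcal{A}_1$, a direct check from the definition of $\Pi'$ yields $\Pi'(A_i) = \Pi'(A_i \setdiff \{\ell_i\})$, since the leaf $\ell_i \in L$ contributes no face while the cycle $C'_i$ associated with its unique incident edge is already captured through the other endpoint of that edge. The set $A_i \setdiff \{\ell_i\}$ is a nonempty connected subset of $V(T-L)$ whose boundary edges in $T$ are the one leaving toward $\ell_i$ (contributing the cuff $C'_i \in \CC'$) and the unique one leaving $A_i$ toward $r$ (contributing a cycle $D$ of $H$ disjoint from $\CC'$); by \Cref{lem:Pi'}, $\Pi'(A_i)$ is a cylinder with cuffs $C'_i$ and $D$. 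For $S = A_i \cup A_j \in \mathcal{A}_2$, the sets $A_i$ and $A_j$ are at distance strictly larger than $1$ in $T$, so $\Pi'(S)$ is the disjoint union of two such cylinders. For $S \in \mathcal{B}$, $S$ is a nonempty connected subset of $V(T-L)$ (by construction it avoids every leaf and is connected), so \Cref{lem:Pi'} applies directly; moreover, no cuff of $\Pi'(S)$ lies in $\CC'$, as every edge in $\delta_T(S)$ has both endpoints outside $L$.

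\textbf{Projecting.} Applying $\pi$ undoes the cuts along those cycles of $\CC$ whose lifts in $\CC'$ appear as cuffs of $\Pi'(S)$. In the $\mathcal{A}_1$ case, $C'_i$ is self-identified along a one-sided cycle of $\surf$; this inverse cut raises the Euler genus by $1$ and removes a cuff, turning the cylinder $\Pi'(A_i)$ into a Möbius strip, i.e., a projective plane with boundary. In the $\mathcal{A}_2$ case, $C'_i$ and $C'_j$ are glued across the two components; by \Cref{prop:gluing_surfaces_with_bd}, the result is an orientable surface with boundary of Euler genus $0$, that is, another cylinder. In the $\mathcal{B}$ case, $\pi$ is injective on $\Pi'(S)$ (since $\Pi'(S)$ is disjoint from $\CC'$), so $\Pi(S)$ is homeomorphic to $\Pi'(S)$. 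The same analysis with $S$ replaced by $S^+$ yields that $\Pi(S^+)$ has the same topological type as $\Pi(S)$: the cuffs of $\Pi'(\cdot)$ in $\CC'$ arise solely from edges of $T$ adjacent to leaves, and no such edge is incident to any vertex of $S^+ \setdiff S$.

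\textbf{Checking nesting.} It remains to verify the conditions of \Cref{def:nested_pair}. For each edge $e \in \delta_T(S)$, the unique vertex $u \in S^+ \setdiff S$ incident to $e$ is an internal vertex of $T$: the separation estimates ensure that $u$ is neither a leaf nor the root. Hence $f(u)$ is a face of $H$ bounded by two consecutive nest cycles of some leg, and its closure is a cylinder in $\surf$ that meets $\Pi(S)$ along exactly the cuff $D(e)$ and contributes the other bounding cycle to $\bd \Pi(S^+)$. Distinct edges $e$ produce disjoint such cylinders, so $\Pi(S^+) \setdiff \intt(\Pi(S))$ is a disjoint union of cylinders, each sharing exactly one cuff with $\Pi(S)$, as required by the definition of a nested pair. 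The main technical subtlety, which we highlight, lies in the $\mathcal{A}_1$ case: one must recognize that adding the leaf $\ell_i$ to $S$ does not grow $\Pi'(S)$ but simply flags the cuff $C'_i \in \CC'$ that $\pi$ will self-identify, producing the projective plane with boundary.
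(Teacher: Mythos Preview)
Your overall strategy matches the paper's: describe $\Pi'(S)$ via \Cref{lem:Pi'}, push forward by $\pi$, and verify nesting via the collar faces $f(u)$ for $u\in S^+\setminus S$. However, you insert an extra structural claim that is false in general, namely that $A_i\setminus\{\ell_i\}$ has exactly two boundary edges (so that $\Pi'(A_i)$ is a cylinder) and that each $u\in S^+\setminus S$ has degree~$2$ (so that $\overline{f(u)}$ is a cylinder). This would hold only if $T$ were a subdivided star, but Yu's theorem does not force that: one nest may sit inside an annulus of another. Concretely, if $\Delta_{1,d}\subseteq\Delta_{2,1}\setminus\Delta_{2,0}$ (which is consistent with the disjointness and ``outside'' conditions of \Cref{Yu}), then the face between $D_{2,0}$ and $D_{2,1}$ is bounded by at least three cycles, so the vertex at distance~$1$ from $\ell_2$ already has degree $\ge 3$, and $\Pi'(A_2)$ has more than two cuffs.

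Fortunately the error is local and does not touch the actual logic you need. What matters is that $\Pi'(A_i)$ is a sphere with boundary exactly one of whose cuffs lies in $\CC'$ (namely $C'_i$), and that each $\overline{f(u)}$ with $u\in S^+\setminus S$ is a sphere with boundary none of whose cuffs lies in $\CC'$; both follow from the distance estimates you already invoke (no leaf is in or adjacent to $S^+\setminus S$). Replace ``cylinder'' by ``sphere with boundary'' and drop the ``bounded by two consecutive nest cycles'' assertion, and your argument goes through and coincides with the paper's.
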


\begin{proof}
    Let $S$ be any member of $\mathcal{S}$. We invoke \Cref{lem:Pi'} and conclude that $\Pi'(S)$ is a sphere with boundary whenever $S \in \mathcal{A}_1 \cup \mathcal{B}$, and the disjoint union of two spheres with boundary whenever $S = A_i \cup A_j \in \mathcal{A}_2$. (Recall that $A_i$ and $A_j$ are at distance at least $d/3 \ge 4$ whenever $i, j \in [g]$ are distinct.)

    Recall that $\pi$ is injective outside of the $g$ cuffs of $\Sigma'$, namely, the cycles $D_{i,0}$ for $i \in [g]$. In order to understand $\Pi(S) = \pi(\Pi'(S))$ topologically, we have to see which cuffs of $\Sigma'$ belong to $\Pi'(S)$.
    
    Observe that $\Pi'(S)$ contains at most two cuffs of $\Sigma'$. More precisely, $S$ contains exactly one cuff if $S \in \mathcal{A}_1$, two cuffs if $S \in \mathcal{A}_2$ and no cuff if $S \in \mathcal{B}$. Furthermore, we made sure that if $S$ contains one cuff, this cuff is mapped to a one-sided cycle in $\surf$ by $\pi$, and if $S$ contains two cuffs, these cuffs originate from the same two-sided cycle in $\surf$, and are both mapped to this cycle by $\pi$.
        
    We derive the following consequences. First, if $S \in \mathcal{B}$, then $\Pi(S)$ is a sphere with boundary. Second, if $S = A_i \cup A_j \in \mathcal{A}_2$, then $\Pi(S)$ is obtained by gluing the spheres with boundary $\Pi'(A_i)$ and $\Pi'(A_j)$ along one common cuff, and is hence again a sphere with boundary. Third, if $S = A_i \in \mathcal{A}_1$, then $\Pi(S)$ is obtained from the sphere with boundary $\Pi'(A_i)$ by identifying pairs of points within one cuff, and is thus a projective plane with boundary.
    
    Notice that $\Pi(S^+)$ is obtained from $\Pi(S)$ by gluing a sphere with boundary along each cuff of $\Pi(S)$. The spheres with boundary that are glued are pairwise disjoint, since the distance of $A_i$ and $A_j$ is at least $4$ for $S=A_i\cup A_j\in\mathcal{A}_2$. Hence $\Pi(S)$ and $\Pi(S^+)$ are nested. In particular, $\Pi(S^+)$ is a sphere with boundary exactly when $\Pi(S)$ is and a projective plane with boundary exactly when $\Pi(S)$ is. Their respective number of cuffs can of course differ.
\end{proof}

\begin{lemma}\label{proj-planar-fw}
	For $S\in \mathcal{A}_1$ we have $\fw(G\cap\Pi(S))\ge r$.
    \end{lemma}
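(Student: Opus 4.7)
The plan is to show that every noose $\ell$ of $G\cap\Pi(A_i)$ in $\widehat{\Pi(A_i)}\cong\mathbb{RP}^2$ satisfies $|\ell\cap V(G)|\ge r$, by relating $\ell$ to a noose of $G$ in $\surf$ and invoking $\fw(G)\ge w(g)\ge r+1$. Recall that $\Pi(A_i)$ is a M\"obius band whose core is the one-sided cycle $C=\pi(D_{i,0})$ of $\surf$ and whose cuff is $\pi(D_{i,r})$, and that the capping disk $D:=\widehat{\Pi(A_i)}\setdiff\intt(\Pi(A_i))$ contains no vertex of $G\cap\Pi(A_i)$.

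First I would handle the case $\ell\subseteq\Pi(A_i)$. Since $\ell$ is non-contractible in $\widehat{\Pi(A_i)}$, it is also non-contractible in $\Pi(A_i)$ (a disk bounded by $\ell$ in $\Pi(A_i)$ would embed as a disk in $\widehat{\Pi(A_i)}$). For $g\ge 2$, the surface group $\pi_1(\surf)$ is torsion-free, and the inclusion $\Pi(A_i)\hookrightarrow\surf$ sends the generator of $\pi_1(\Pi(A_i))=\Z$ to the nontrivial element $[C]\in\pi_1(\surf)$ (since $C$ is one-sided in $\surf$); hence $\pi_1(\Pi(A_i))\hookrightarrow\pi_1(\surf)$ is injective. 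So $\ell$ is non-contractible in $\surf$ as well, making $\ell$ a noose of $G$ in $\surf$, which yields $|\ell\cap V(G)|\ge\fw(G)\ge r+1>r$.

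The remaining case is when $\ell$ enters $D$. Here the strategy is to modify $\ell$ into a simple closed curve $\ell^*\subseteq\Pi(A_i)$ homotopic to $\ell$ in $\widehat{\Pi(A_i)}$ with $|\ell^*\cap V(G)|\le|\ell\cap V(G)|$, thereby reducing to the first case. Each arc of $\ell$ in $\intt(D)$ has both endpoints on $V(G)\cap\pi(D_{i,r})$, and, since $D$ is simply connected, is homotopic rel endpoints to an arc on $\partial D=\pi(D_{i,r})$. I would then perturb the resulting curve slightly into $\intt(\Pi(A_i))$, routing through faces of $G\cap\Pi(A_i)$ in the annular strip between $\pi(D_{i,r-1})$ and $\pi(D_{i,r})$; the polyhedrality of $G$ (from $3$-connectivity and $\fw(G)\ge 3$) together with the nested cycle structure $\pi(D_{i,1}),\ldots,\pi(D_{i,r})$ provides enough room to do this while avoiding new vertex intersections.

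The main obstacle is this bookkeeping step: organizing the local perturbations so that the concatenated curve $\ell^*$ remains simple, stays in the homotopy class of $\ell$ in $\widehat{\Pi(A_i)}$, and acquires no extra vertex crossings beyond those already on $\ell$. The nested cycles ensure that we have an annular ``buffer zone'' close to the cuff where the rerouting can be performed face-by-face, so that each perturbation skirts only the edge of $\pi(D_{i,r})$ it needs to avoid. Once $\ell^*$ is constructed, the first case applies and gives $|\ell^*\cap V(G)|\ge r+1$, whence $|\ell\cap V(G)|\ge r$.
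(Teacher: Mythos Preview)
Your Case~1 argument is valid, though heavier than needed: once you know the noose lies in $\Pi(A_i)$, the paper simply observes that a non-contractible curve in $\widehat{\Pi(A_i)}\cong\mathbb{RP}^2$ is one-sided, and one-sidedness is a local property preserved under the inclusion $\Pi(A_i)\hookrightarrow\surf$, so the curve is one-sided (hence non-contractible) in $\surf$. No appeal to torsion-freeness of $\pi_1(\surf)$ is required. (Also, $\Pi(A_i)$ need not have a single cuff --- $A_i$ can branch in $T$ --- so $\pi_1(\Pi(A_i))$ is a free group rather than $\Z$ in general; your injectivity argument would need to be adapted.)

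The real problem is Case~2. Your rerouting cannot be carried out without adding vertex crossings. Each maximal arc of $\ell$ in $\intt(D)$ joins two vertices $u,v$ on $\partial D=\pi(D_{i,r})$. Replacing that arc by a $G$-normal arc just inside $\Pi(A_i)$ and homotopic rel endpoints forces the new arc to pass through \emph{every} vertex of $\partial D$ lying between $u$ and $v$: a $G$-normal curve cannot cross an edge of $\pi(D_{i,r})$ except at a vertex, and pushing deeper into the annulus only trades these for crossings with $\pi(D_{i,r-1})$. There is no reason for $u$ and $v$ to be adjacent on $\partial D$, so the inequality $|\ell^*\cap V(G)|\le|\ell\cap V(G)|$ fails in general. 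The ``buffer zone'' does not help here; it gives room for homotopies, not for free $G$-normal passage.

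The paper sidesteps Case~2 entirely via one extra observation you did not use: any two non-contractible simple closed curves in $\mathbb{RP}^2$ must intersect (their $\Z/2$-intersection number is $1$). Hence the noose $o$ meets the core cycle $\pi(D_{i,0})$. If $o$ also reached a capping disk, an arc of $o$ would run from $\pi(D_{i,0})$ across the pairwise disjoint cycles $\pi(D_{i,1}),\ldots,\pi(D_{i,r})$, forcing at least $r$ vertex crossings already --- contradicting $|o\cap V(G\cap\Pi)|<r$. So $o\subseteq\Pi(A_i)$ automatically, and your Case~1 (in its simpler one-sidedness form) finishes the proof.
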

\begin{proof}
	Let $\Pi:=\Pi(S)$. 
    Let $\ell_i\in L_1$ be the leaf of $T$ which is contained in $S$. 
	By \Cref{set-props}, $\Pi$ is a projective plane with boundary. 
    Assume for a contradiction that $\fw(G\cap\Pi)<r$. 
    Let $o$ be a noose in $\widehat{\Pi}$ which intersects $G\cap\Pi$ in less than $r$ vertices. 
    The noose $o$ has to intersect $D_{i,0}$ since both $o$ and $D_{i,0}$ correspond to non-contractible curves in $\Pi$. 
    Since the noose intersects $G\cap\Pi$ in less than $r$ vertices, it is contained in $\Pi$ (recall that $d=3r+1$). 
    Since $o$ is one-sided, it is also a noose in $\surf$ intersecting $G$ in less than $r$ vertices. 
    This is a contradiction to the assumption that the face-width of $G$ is at least $\max\{w_{\ref{Yu}}(g, d), r+1\}$. 
\end{proof}

\begin{proof}[Proof of \Cref{thm:bounded_genus}]
    By the above construction and \Cref{set-props}, we obtain a collection of at most $2g$ pairs of nested spheres with boundary and projective planes with boundary $\left\{(\Pi(S),\Pi(S^+))\right\}_{S\in\mathcal{S}}$ forming a surface cover of $\surf$.
    By \Cref{3c-minor}, we obtain a minor $H_S$ for each $S\in\mathcal{S}$, such that $H_S$ is $3$-connected and agrees with $G$ in the neighborhood of $\Pi(S)$.
    Further, if $H_S$ is embedded in a projective plane, we know by \Cref{proj-planar-fw} that it has face-width at least $16$.
    Since the embeddings of $G$ and $H_S$ coincide within $\Pi(S)$, and $H_S$ is a supergraph of $G\cap\Pi(S)$, we conclude that $\fw(H_S) \ge \fw(G\cap\Pi(S)) \ge 16$ in this case.
    We denote by $R_S:=R\cap V(G\cap\Pi(S))$. 
    Then, by \Cref{thm:planar} and \Cref{proj-planar-case}, we obtain a face cover of size $O(t^4)$ for each $(H_S,R_S)$.
    We conclude by \Cref{lifting-covers} that $(G,R)$ admits a face cover of size $O(g \cdot t^4)$.
\end{proof}

We define the \emph{strong product} of two graphs $G$ and $H$ as $G\boxtimes H$, such that the vertex set corresponds to the cartesian product, that is, $V(G\boxtimes H):=V(G)\times V(H)$.
Further, we have an edge $(u,v)(u',v')\in E(G\boxtimes H)$ if either $u=u'$ and $vv'\in E(H)$, or $uu'\in E(G)$ and $v=v'$, or $uu'\in E(G)$ and $vv'\in E(H)$.

For $p \ge 3$ and $n = 2p$, we define the {\em bagel} as $B_n := K_2 \boxtimes C_p$, that is, the strong product of an edge and the cycle $C_p$. We denote the vertices of the first copy of $C_p$ by $v_1,\dots,v_p$, and the vertices of the second copy by $w_1,\dots,w_p$. Note that there is an edge between $v_i$ and $w_j$ if and only if $i-j\in\{0,\pm1\}$ (recall that indices are computed cyclically). To turn $B_n$ in a rooted graph, we mark each vertex as a root.

\begin{proposition} \label{prop:bagel}
For all even $n \ge 6$, $B_n$ has no (rooted) $K_{2,5}$ minor.
\end{proposition}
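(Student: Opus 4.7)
The plan is to suppose, for contradiction, that $B_n$ admits a $K_{2,5}$-model with branch sets $X_1, X_2, Y_1, \ldots, Y_5$, and to derive a contradiction by analyzing \emph{column projections}. For $u \in V(B_n)$, set $\mathrm{col}(u) = i$ when $u \in \{v_i, w_i\}$, and for $S \subseteq V(B_n)$ set $\pi(S) = \{\mathrm{col}(u) : u \in S\}$. Two structural facts underpin the argument: (a) if $S$ is connected in $B_n$, then $\pi(S)$ is an arc in $C_p$, since every edge of $B_n$ connects vertices in either the same column or two cyclically consecutive columns; (b) each column has only two vertices, so at most two pairwise disjoint branch sets can touch any given column.

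I would split into two main cases. In Case~1, some branch set $S$ satisfies $\pi(S) = [p]$. Then $S$ uses either exactly one vertex per column, or has both vertices in some columns. In the first sub-case, the complement $V(B_n) \setminus S$ induces a cycle $C_p$ in $B_n$; in the second, it induces a disjoint union of paths, separated by the ``full'' columns where $S$ uses both vertices. Since no edge of $B_n$ can cross a full column (adjacent columns differ by exactly one), the six non-$S$ branch sets that need mutual adjacency for the $K_{2,5}$ model must lie entirely inside a single cycle or path of the complement. But within a cycle or path, each arc has at most two arc-neighbors, contradicting the requirement that one of $X_1, X_2$ (the one not equal to $S$) be arc-adjacent to at least four other arcs.

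In Case~2, both $A_1 := \pi(X_1)$ and $A_2 := \pi(X_2)$ are proper arcs of $C_p$. Writing $\widetilde{A}_i := A_i \cup \{c \in C_p : c \text{ is adjacent to } A_i\}$, each $K_{2,5}$-model edge $Y_j \sim X_i$ in $B_n$ translates to $\pi(Y_j) \cap \widetilde{A}_i \neq \emptyset$; vertex-disjointness of branch sets also forces $\pi(Y_j)$ to avoid any column of $A_1 \cap A_2$, both of whose vertices are already consumed by $X_1, X_2$. I would then analyze the relative position of $A_1$ and $A_2$ on $C_p$---disjoint with two non-trivial gaps, disjoint with one gap, overlapping, nested, or with union covering $C_p$---and show sub-case by sub-case that each valid $Y_j$'s projection either contains a specific boundary column in $\widetilde{A}_1 \cap \widetilde{A}_2$ (of which there are at most two, each hosting at most one $Y_j$-available vertex since the column belongs to $A_1 \cup A_2$), or covers an entire gap $C_p \setminus (A_1 \cup A_2)$ (at most two $Y_j$'s per gap by fact~(b), since each gap column has two free vertices). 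This yields at most $2+2 = 4$ valid $Y_j$'s in every sub-case, contradicting the existence of five.

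The hardest part will be the detailed sub-case analysis in Case~2: the several configurations of $A_1, A_2$ on $C_p$ each need their own accounting of how many $Y_j$-vertices remain available at the boundary columns of $\widetilde{A}_1 \cap \widetilde{A}_2$ and at the columns of the gap(s), depending on which of them already host vertices of $X_1$ or $X_2$ and whether $\widetilde{A}_1$ and $\widetilde{A}_2$ overlap or merely touch.
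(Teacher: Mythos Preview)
Your column-projection approach is sound and will go through, but it takes a much longer route than the paper's. The paper exploits two features of $B_n = K_2 \boxtimes C_p$ that you do not use: (i) $v_j$ and $w_j$ have identical closed neighbourhoods, so each $M(x_i)$ can be pruned to use at most one vertex per column while remaining connected and keeping all adjacencies; (ii) swapping $v_j \leftrightarrow w_j$ in any single column is an automorphism, so after pruning one may assume $M(x_1) \subseteq \{v_1,\dots,v_p\}$ and $M(x_2) \subseteq \{w_1,\dots,w_p\}$. Taking the central branch sets to be singletons (all vertices are roots), pigeonhole then places at least three of them in the $v$-row, whereas the path $M(x_1)$ has only two $C_p$-neighbours in that row---done in a few lines.

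Two small points about your sketch. In Case~1 your phrasing ``the six non-$S$ branch sets \ldots\ must lie entirely inside a single cycle or path of the complement'' is not quite right: they need not all lie in one component, but the relevant $X_i$ does, and since it can only be $B_n$-adjacent to arcs in its own component, that already forces at most two arc-neighbours, which is the contradiction you want. In Case~2 your parenthetical that each boundary column of $\widetilde{A}_1 \cap \widetilde{A}_2$ ``belongs to $A_1 \cup A_2$'' (and hence hosts only one free vertex) fails when a gap between $A_1$ and $A_2$ has length one: the sole column of $\widetilde{A}_1 \cap \widetilde{A}_2$ on that side is then a gap column with \emph{two} free vertices. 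The bound of four on the number of $Y_j$'s still holds in every configuration, but the accounting genuinely varies by sub-case and will require more care than the uniform ``$2+2$'' summary suggests.
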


\begin{proof}
    Toward a contradiction, assume that $B_n$ has a $K_{2,5}$-model $M$ for some even $n = 2p \ge 6$, and assume that $n$ is minimum among such counterexamples. 
         
    As before, we denote by $M(x_1)$ and $M(x_2)$ the two branch sets that are adjacent to each \emph{central} branch set $M(y_1),\dots,M(y_5)$.
    Since every vertex of $B_n$ is a root, we can assume without loss of generality that each central branch set $M(y_i)$ contains a single vertex. Summing up, we have seven pairwise disjoint connected sets, namely $M(x_1)$, $M(x_2)$ plus five singleton sets $M(y_1)$, \ldots, $M(y_5)$, such that $M(y_i)\subseteq N(M(x_1))\cap N(M(x_2))$ holds for all $i \in [5]$. 
        
    In case there is some $i \in [2]$ and $j \in [p]$ such that $M(x_i)$ contains both $v_j$ and $w_j$, we can safely delete one from $M(x_i)$ without destroying our $K_{2,5}$-model.
    Observe further that for any $j \in [p]$, exchanging $v_j$ and $w_j$ 
    yields a graph isomorphism of $B_n$.
    Hence, we may assume without loss of generality that $M(x_1) \subseteq \{v_1,\ldots,v_p\}$ and $M(x_2) \subseteq \{w_1,\ldots,w_p\}$.
    Further, $M(x_1)$ and $M(x_2)$ induce connected subgraphs in their respective cycles, that is, each induces a path or a cycle.

    Since there is a total of $5$ vertices corresponding to the branch sets $M(y_i)$, we can assume without loss of generality that at least $3$ of them are contained in $\{v_1,\dots,v_p\}$.
    Thus in particular $M(x_1)$ cannot contain all vertices in $\{v_1,\dots,v_p\}$ and it induces a path $P=M(x_1)$ within $\{v_1,\dots,v_p\}$.
    Observe that for any such path $|N(P)\cap\{v_1,\dots,v_p\}|\le 2$, which contradicts the assumption that at least $3$ of the central branch sets are contained in $\{v_1,\dots,v_p\}$.
   \end{proof}

\section*{Acknowledgements}

The authors thank Gwena\"el Joret for discussions during the early stage of the research, Bojan Mohar for pointing out the reference~\cite{Yu97} to us, and Ken-Ichi Kawarabayashi for pointing out another relevant reference, which we ended up not using.

Samuel Fiorini, Abhinav Shantanam and Stefan Kober acknowledge funding from \emph{Fonds de la Recherche Scientifique} - FNRS through research project BD-DELTA (PDR 20222190, 2021--24). Samuel Fiorini was also funded by \emph{King Baudouin Foundation} through project BD-DELTA-2 (convention 2023-F2150080-233051, 2023--26). Michał Seweryn acknowledges funding from ERC-CZ project LL2328 of the Ministry of Education of Czech Republic. Yelena Yuditsky was supported by FNRS as a Postdoctoral Researcher. 

\bibliographystyle{alpha}
\bibliography{references}

\newcommand{\etalchar}[1]{$^{#1}$}
\begin{thebibliography}{BKMM08}

\bibitem[AFJ{\etalchar{+}}25]{AFJKSWY25}
Manuel Aprile, Samuel Fiorini, Gwena{\"e}l Joret, Stefan Kober, Micha{\l}~T.
  Seweryn, Stefan Weltge, and Yelena Yuditsky.
\newblock Integer programs with nearly totally unimodular matrices: the
  cographic case.
\newblock In {\em Proceedings of the 2025 Annual ACM-SIAM Symposium on Discrete
  Algorithms (SODA)}, pages 2301--2312. SIAM, 2025.

\bibitem[Arm79]{Armstrong}
{M.~A.} Armstrong.
\newblock {\em Basic Topology}.
\newblock Springer New York, NY, 1979.

\bibitem[BD92]{BD92}
Daniel Bienstock and Nathaniel Dean.
\newblock On obstructions to small face covers in planar graphs.
\newblock {\em Journal of Combinatorial Theory, Series B}, 55(2):163--189,
  1992.

\bibitem[BKMM08]{BKMM08}
Thomas B{\"o}hme, Ken-ichi Kawarabayashi, John Maharry, and Bojan Mohar.
\newblock {$K_{3,k}$-minors in large $7$-connected graphs}, 2008.
\newblock unpublished.

\bibitem[BM02]{BM02}
Thomas B{\"o}hme and Bojan Mohar.
\newblock {Labeled $K_{2, t}$ minors in plane graphs}.
\newblock {\em Journal of Combinatorial Theory, Series B}, 84(2):291--300,
  2002.

\bibitem[CdV21]{CdV21}
{\'E}ric Colin~de Verdi{\`e}re.
\newblock Algorithms for embedded graphs, 2021.
\newblock lecture notes available at \url{http://monge.
  univ-mlv.fr/~colinde/cours/all-algo-embedded-graphs.pdf}.

\bibitem[Die17]{Diestel}
Reinhard Diestel.
\newblock {\em Graph theory (5th edition)}.
\newblock Springer Berlin, Heidelberg, 2017.

\bibitem[Fel01]{Fel01}
Stefan Felsner.
\newblock Convex drawings of planar graphs and the order dimension of
  3-polytopes.
\newblock {\em Order}, 18:19--37, 2001.

\bibitem[GHLS08]{GHLS08}
Isidoro Gitler, Petr Hlin{\v{e}}n{\'y}, Jesus Lea{\~n}os, and Gelasio Salazar.
\newblock The crossing number of a projective graph is quadratic in the
  face–width.
\newblock {\em The Electronic Journal of Combinatorics}, 15, 2008.

\bibitem[KS15]{KS15}
Ken-ichi Kawarabayashi and Anastasios Sidiropoulos.
\newblock Beyond the euler characteristic: Approximating the genus of general
  graphs.
\newblock In {\em Proceedings of the forty-seventh annual ACM symposium on
  Theory of Computing}, pages 675--682, 2015.

\bibitem[Moh97]{Moh97}
Bojan Mohar.
\newblock Face-width of embedded graphs.
\newblock {\em Mathematica Slovaca}, 47(1):35--63, 1997.

\bibitem[MT01]{MT01}
Bojan Mohar and Carsten Thomassen.
\newblock {\em Graphs on surfaces}.
\newblock Johns Hopkins Studies in the Mathematical Sciences. Johns Hopkins
  University Press, Baltimore, MD, 2001.

\bibitem[Sch89]{Sch89}
Walter Schnyder.
\newblock Planar graphs and poset dimension.
\newblock {\em Order}, 5:323--343, 1989.

\bibitem[Sch90]{Sch90}
Walter Schnyder.
\newblock Embedding planar graphs on the grid.
\newblock In {\em Proceedings of the first annual ACM-SIAM symposium on
  Discrete algorithms}, pages 138--148, 1990.

\bibitem[Sch24]{Schl24}
Niklas Schlomberg.
\newblock An improved integrality gap for disjoint cycles in planar graphs.
\newblock In Karl Bringmann, Martin Grohe, Gabriele Puppis, and Ola Svensson,
  editors, {\em 51st International Colloquium on Automata, Languages, and
  Programming, {ICALP} 2024, July 8-12, 2024, Tallinn, Estonia}, volume 297 of
  {\em LIPIcs}, pages 122:1--122:15. Schloss Dagstuhl - Leibniz-Zentrum
  f{\"{u}}r Informatik, 2024.

\bibitem[Yu97]{Yu97}
Xingxing Yu.
\newblock Disjoint paths, planarizing cycles, and spanning walks.
\newblock {\em Transactions of the American Mathematical Society},
  349(4):1333--1358, 1997.

\end{thebibliography}

\end{document}